\numberwithin{equation}{section}
\DeclareFontFamily{OT1}{rsfs}{}
\DeclareFontShape{OT1}{rsfs}{n}{it}{<-> rsfs10}{}
\DeclareMathAlphabet{\mathscr}{OT1}{rsfs}{n}{it}
\theoremstyle{plain}
\newtheorem{theorem}{Theorem}[section]
\newtheorem{proposition}[theorem]{Proposition}
\newtheorem{lemma}[theorem]{Lemma}
\newtheorem{problem}[theorem]{Problem}
\theoremstyle{definition}
\newcommand\E{\mathbf{E}}
\renewcommand\P{\mathbf{P}}
\newcommand\Var{\mathbf{Var}}
\newcommand\R{\mathbb{R}}
\newcommand\Z{\mathbb{Z}}
\newcommand\N{\mathbb{N}}
\newcommand\C{\mathbb{C}}
\newcommand\eps{\varepsilon}
\begin{document}

\title{Exploring the toolkit of Jean Bourgain}

\author{Terence Tao}
\address{UCLA Department of Mathematics, Los Angeles, CA 90095-1555.}
\email{tao@math.ucla.edu}



\begin{abstract}  Gian-Carlo Rota asserted in \cite{rota} that ``every mathematician only has a few tricks''.  The sheer breadth and ingenuity in the work of Jean Bourgain may at first glance appear to be a counterexample to this maxim.  However, as we hope to illustrate in this article, even Bourgain relied frequently on a core set of tools, which formed the base from which problems in many disparate mathematical fields could then be attacked.  We discuss a selected number of these tools here, and then perform a case study of how an argument in one of Bourgain's papers \cite{similarity} can be interpreted as a sequential application of several of these tools.
\end{abstract}

\maketitle


\section{Introduction}

As the other articles in this collection demonstrate, Jean Bourgain achieved breakthroughs in an astounding number of areas across mathematics, many of which would seem at first glance to be quite unrelated to each other.  It is almost beyond belief that a single mathematician could have such deep impact on so many different subfields during his or her career.  However, if one compares Bourgain's works in different topics, some common threads begin to emerge.  In particular, one discovers that Bourgain had a certain ``toolkit'' of basic techniques that he was extremely skilled at using in a great variety of situations.  While this toolkit is far from sufficient to explain the powerful nature of his work, it does show how he could at least get started on making progress in so many different mathematical problems.  In this article we present a selection of Bourgain's most commonly used tools: quantification of qualitative estimates, dyadic pigeonholing, random translations, and metric entropy and concentration of measure.  These tools often did not originate with Bourgain's work, but he was able to wield them systematically and apply them to a far broader range of problems than had previously been realized.  This is far from a complete list of Bourgain's standard tools - for instance, many of his works also feature systematic use of the uncertainty principle, or the method of probabilistic decoupling - but it is an illustrative sample of that basic toolkit.

Of course, in many cases Bourgain's work involved much deeper and delicate arguments than just the general-purpose techniques presented here.  Nevertheless, knowledge of these basic tools helps place Bourgain's arguments in a more systematic framework, in which these preliminary techniques are used to isolate the core features of the problem, which were then attacked by the full force of Bourgain's intellectual firepower.  But sometimes these basic methods are already enough to solve non-trivial problems almost on their own.  We illustrate this in the final section (Section \ref{case}) by tracing through a single paper \cite{similarity} of Bourgain's, giving one of the most general results known on the Erd\H{o}s similarity problem; as we hope to demonstrate, this non-trivial result can be interpreted as essentially being a sequential application of each of the tools listed here in turn.

\section{Notation}

We use $1_E$ to denote the indicator of a set $E$, and use $|E|$ to denote either the cardinality of $E$ (if it is finite) or its Lebesgue measure (if it is infinite).  If $E$ lies in a vector space, we use $x+E \coloneqq \{ x+y: y \in E \}$ for the translate of $E$ by $x$, $x - E \coloneqq  \{ x-y: y \in E \}$ for the reflection of $E$ across $x/2$, and $\lambda \cdot E \coloneqq  \{ \lambda y: y \in E \}$ for the dilation of $E$ by $\lambda$.

We use $\P(E)$ to denote the probability of a random event $E$, and $\E X$ and $\Var(X)$ for the mean and variance of a random variable $X$.

We will use the asymptotic notation $X \lesssim Y$, $Y \gtrsim X$, or $X = O(Y)$ to denote the bound $|X| \leq CY$ for an absolute constant $C$. If the constant $C$ depends on parameters we will indicate this by parameters, for instance $X \lesssim_{p,d} Y$ denotes the bound $|X| \leq C_{p,d} Y$ for a constant $C_{p,d}$ depending only on $p,d$.  It may be noted that in Bourgain's work the implied constant was sometimes omitted completely from the notation.  In the words of Heath-Brown, in his Mathematical Reviews summary of Bourgain's paper \cite{dirichlet},
``Readers are advised not to take some of the assertions made in the course of the proofs too literally, and, in particular, to abandon any preconceived notion as to the meaning of the symbol ``$\sim$''''.

\section{Quantitative formulation of qualitative problems}\label{quant-sec}

One can roughly divide analysis into ``soft analysis'' - the study of qualitative properties (continuity, measurability, integrability, etc.) of infinitary objects, and ``hard analysis'' - the study of quantitative estimation of finitary objects.  Bourgain's toolkit lies almost exclusively in the latter category, so when tackling a ``soft analysis'' problem, often the first step in one of Bourgain's arguments is to locate a more quantitative ``hard analysis'' estimate that will imply the desired claim, removing almost all the appearances of limits or arbitrarily large and small scales, and instead working with a large but finite number of scales and focusing on estimates that are uniform with respect to several parameters.

For instance, consider the following result of Furstenberg, Katznelson, and Weiss \cite{fkw}:

\begin{theorem}[Furstenberg-Katznelson-Weiss theorem, qualitative version]\label{fkw}  Let $A \subset \R^2$ be a measurable set whose upper density $\delta \coloneqq \limsup_{R \to \infty} \frac{|A \cap B(0,R)|}{|B(0,R)|}$ is positive.  Then there exists $l_0$ such that for all $l \geq l_0$, there exist $x,y \in A$ with $|x-y| \geq l$.
\end{theorem}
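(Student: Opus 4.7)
My plan is to follow the guiding principle of this section: replace the qualitative statement by a quantitative finite-scale one. Concretely, I would aim to prove the following hard-analytic version: for every $\delta > 0$ there exist $l_0 = l_0(\delta)$ and $C = C(\delta)$ such that whenever $l \ge l_0$, $R \ge Cl$, and $A \subset B(0,R)$ is measurable with $|A| \ge \delta |B(0,R)|$, one can find $x,y \in A$ with $|x-y| = l$ (reading ``$\ge l$'' in the statement as ``$= l$'', which is the non-trivial content). Given this quantitative estimate, Theorem~\ref{fkw} is immediate: choose a sequence $R_n \to \infty$ along which $|A \cap B(0,R_n)|/|B(0,R_n)| \ge \delta/2$, and apply the estimate with $\delta/2$ in place of $\delta$ to $A \cap B(0,R_n)$ for each fixed $l \ge l_0(\delta/2)$, taking $n$ large enough that $R_n \ge Cl$.

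For the quantitative step I would use Fourier analysis on $\R^2$. Let $\sigma_l$ denote the normalized arc-length measure on the circle of radius $l$; the existence of a pair $x,y \in A$ with $|x-y| = l$ is equivalent to positivity of the pairing $\int_{\R^2} 1_A(x)\,(1_A * \sigma_l)(x)\, dx$. By Plancherel this equals $\int_{\R^2} |\widehat{1_A}(\xi)|^2 \widehat{\sigma_l}(\xi)\, d\xi$, where $\widehat{\sigma_l}(\xi) = J_0(2\pi l |\xi|)$ equals $1$ at $\xi=0$ and decays like $(l|\xi|)^{-1/2}$ as $l|\xi| \to \infty$. The zero-frequency contribution is the main term, of size at least $\delta^2 |B(0,R)|^2$; the task is to dominate the nonzero-frequency contribution by this.

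The obstacle is the middle range $l|\xi| \sim 1$, where the Bessel decay is not yet effective. My plan for this is a frequency split, combined with a mild average over $l$ to exploit the oscillation of $J_0$. Split $\widehat{1_A}$ into a low-frequency piece on $\{|\xi| \le \eta\}$ and a high-frequency piece. On the high-frequency piece, Bessel decay gives a bound $\lesssim (l\eta)^{-1/2} \|1_A\|_2^2$, which is negligible compared to $\delta^2|B(0,R)|^2$ once $l\eta$ is large in terms of $\delta$. For the low-frequency piece, write $1_A = \delta \cdot 1_{B(0,R)} + (1_A - \delta \cdot 1_{B(0,R)})$: the first summand produces the main term, while the second has mean zero, and its $L^2$ mass on low frequencies is controlled by the large-scale density fluctuations of $A$, which are small when $R$ is taken much larger than $1/\eta$.

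The main obstacle, as noted, is the annular frequency range $l|\xi| \sim 1$. The cleanest remedy is to average $l$ over a short interval $[l, l+\eps]$: this convolves $J_0(2\pi l|\xi|)$ in $l$ and damps its oscillation, producing an effective decay over the entire range $|\xi| \gtrsim 1/l$, and hence guarantees positivity for \emph{some} $l$ in that interval. A final perturbation argument (using that $A$ has positive density, so the pair-counting function is continuous on scales $\ll 1/\eta$) lets one recover a pair at the exact distance $l$ rather than merely in the annulus. Tuning $\eta$ and $\eps$ in terms of $\delta$ then produces the required $l_0(\delta)$, which via the reduction of the first paragraph yields Theorem~\ref{fkw}.
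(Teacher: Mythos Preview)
Your proposed quantitative formulation is too strong: it is actually \emph{false}, and this is precisely the point the paper stresses immediately after stating Theorem~\ref{fkw}. No threshold $l_0$ depending only on the density $\delta$ can exist, because the problem is scale-invariant. Concretely, fix any $\delta>0$ and any candidate $l_0(\delta)$, pick any $l\ge l_0$, set $r=l/5$, and let $A\subset B(0,R)$ be the union of the balls of radius $r$ centred at the points of the lattice $(10r)\Z^2$ inside $B(0,R)$. For $R$ large this has density $\gtrsim 1$ (independent of $l$), yet every pair $x,y\in A$ has $|x-y|\le 2r<l$ or $|x-y|\ge 8r>l$, so no pair at distance exactly $l$ exists. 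This kills both your quantitative statement and your proposed ``perturbation argument'' for passing from the annulus $[l,l+\eps]$ back to the exact value $l$.

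The Fourier split into low, medium, and high frequencies is exactly right, and the low and high pieces behave as you say. The genuine obstruction is the medium annulus $\{|\xi|\sim 1/l\}$, and your remedy (averaging $l$ over a short interval) cannot succeed for a single $l$ because of the example above. Bourgain's insight, which is the content of Theorem~\ref{borg-fkw}, is that one should not ask for every large $l$ to work in a fixed finite configuration, but only for \emph{one} scale among many lacunary scales $t_1>\dots>t_J$ to work. The medium annuli $\{\delta/t_j\le|\xi|\le 1/(\delta t_j)\}$ then have overlap $O(\log(1/\delta))$, so by Plancherel the \emph{sum} over $j$ of the medium contributions is bounded, and dyadic pigeonholing produces a good $j$. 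The deduction of Theorem~\ref{fkw} then goes by contradiction: a hypothetical infinite sequence of bad scales, sparsified to be lacunary and rescaled into $[-1,1]^2$, contradicts Theorem~\ref{borg-fkw}. This is why the argument gives no bound on the magnitude of $l_0$, only on the \emph{number} of exceptional scales.
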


Note that this theorem does not provide any quantitative bound for the length threshold $l_0$ in terms of the upper density $\delta$.  Indeed, such a bound is not possible, since if one replaces $A$ by a rescaled version $\lambda \cdot A \coloneqq \{ \lambda x: x \in A \}$ then the length threshold $l_0$ will be replaced by $\lambda l_0$, while the upper density $\delta$ remains unchanged.  As such, one may be tempted to conclude that Theorem \ref{fkw} is irredeemably ``qualitative'' in nature.  Nevertheless, in \cite{density}, Bourgain gave a new proof of this theorem (as well as several novel generalisations) by first establishing the following quantitative analogue:

\begin{theorem}[Furstenberg-Katznelson-Weiss theorem, quantitative version]\label{borg-fkw}  Let $0 < \eps < \frac{1}{2}$, let $B \subset [-1,1]^2$ have measure $|B| \geq \eps$, and let $J = J(\eps)$ a be sufficiently large natural number depending on $\eps$.  Suppose that $0 < t_J < \dots < t_1 \leq 1$ are a sequence of scales with $t_{j+1} \leq t_j/2$ for all $1 \leq j < J$.  Then for at least one $1 \leq j \leq J$, one has
\begin{equation}\label{ar}
 \int_{\R^2} \int_{S^1} 1_B(x) 1_B(x+t_j\omega)\ d\sigma(\omega) dx \gtrsim \eps^2,
\end{equation}
where $d\sigma$ is surface measure on the unit circle $S^1$, normalised to have unit mass.
\end{theorem}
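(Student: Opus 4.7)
The plan is to pass to the Fourier side. Writing $f = 1_B$, Plancherel gives
\begin{equation*}
I_j = \int_{\R^2} |\hat f(\xi)|^2\, \hat\sigma(t_j \xi)\,d\xi,
\end{equation*}
where $\hat\sigma(\eta) = \int_{S^1}e^{-2\pi i \omega\cdot\eta}\,d\sigma(\omega)$ is a real-valued Bessel-type profile satisfying $\hat\sigma(0)=1$ and the classical bound $|\hat\sigma(\eta)| \lesssim (1+|\eta|)^{-1/2}$. Fix a small $r_0 > 0$ for which $\hat\sigma(\eta)\geq 1/2$ on $|\eta|\leq r_0$, and decompose $I_j$ according to the size of $|\xi| t_j$: a low-frequency core $\{|\xi|\leq r_0/t_j\}$, a middle annulus $\{r_0/t_j < |\xi| \leq R/t_j\}$, and a tail $\{|\xi| > R/t_j\}$, where $R = R(\eps)$ is to be chosen large.

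For the core, fix a non-negative Schwartz mollifier $\phi$ with $\int \phi = 1$, $\hat\phi$ supported in $\{|\xi|\leq r_0\}$, and $\int_{|x|_\infty > 1} \phi \leq 1/2$; this is achievable by taking $\phi = |\chi|^2/\|\chi\|_2^2$ for a suitable Schwartz $\chi$. Set $\phi_t(x) := t^{-2}\phi(x/t)$ and $f_j := f*\phi_{t_j}$. Then $\hat f_j = \hat f \cdot \hat\phi(t_j\cdot)$ is supported in $\{|\xi|\leq r_0/t_j\}$ with $|\hat\phi|\leq 1$, so $\|f_j\|_2^2 \leq \int_{|\xi|\leq r_0/t_j}|\hat f|^2\,d\xi$. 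On the other hand $f_j \geq 0$, $\int f_j = |B|\geq\eps$, and the conditions on $\phi$ combined with $B\subset[-1,1]^2$ and $t_j\leq 1$ force $\int_{[-2,2]^2}f_j \geq |B|/2$; Cauchy--Schwarz on $[-2,2]^2$ then yields $\|f_j\|_2^2 \geq \eps^2/64$. Since $\hat\sigma(t_j\xi) \geq 1/2$ throughout the core, its contribution to $I_j$ is $\gtrsim \eps^2$ uniformly in $j$.

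The middle annulus is killed by a pigeonhole across the $J$ scales. Because $t_{j+1}\leq t_j/2$, each $\xi\in\R^2$ lies in at most $O(\log(R/r_0))$ of the annuli $A_j := \{r_0/t_j\leq|\xi|\leq R/t_j\}$, hence
\begin{equation*}
\sum_{j=1}^J \int_{A_j}|\hat f|^2\,d\xi \lesssim \log(R)\cdot\|f\|_2^2 \lesssim \log R.
\end{equation*}
Thus some $j^*$ satisfies $\int_{A_{j^*}}|\hat f|^2 \lesssim \log(R)/J$, which with $|\hat\sigma|\leq 1$ bounds the middle contribution to $I_{j^*}$ by $O(\log(R)/J)$. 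The tail is handled directly by the Bessel decay: on $\{|\xi|>R/t_{j^*}\}$ we have $|\hat\sigma(t_{j^*}\xi)|\lesssim R^{-1/2}$, giving a tail contribution of size $\lesssim R^{-1/2}\|f\|_2^2 \lesssim R^{-1/2}$.

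Balancing parameters closes the argument: take $R \sim \eps^{-4}$ so $R^{-1/2}\ll\eps^2$, and require $J(\eps) \gtrsim \log(1/\eps)/\eps^2$ so that $\log(R)/J \ll \eps^2$ as well. The core contribution then dominates the middle and tail contributions at $j = j^*$, yielding \eqref{ar}. The main obstacle is that $\hat f$ enjoys no pointwise decay beyond the trivial $|\hat f|\leq |B|$, so the middle and tail have to be controlled using only crude $L^2$ information together with the square-root Bessel decay of $\hat\sigma$. What makes it work is the geometric separation of the scales $t_j$, which limits the pigeonhole loss to a logarithm in $R$ and is therefore absorbable by taking $J$ essentially of order $\log(1/\eps)/\eps^2$.
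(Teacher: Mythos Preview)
Your argument is correct and follows essentially the same route as the paper's sketch: pass to the Fourier side, split into low/medium/high frequency regions, use lacunarity of the $t_j$ to bound the overlap of the medium annuli and pigeonhole to find a good scale, and kill the tail with the $|\eta|^{-1/2}$ decay of $\hat\sigma$. The only notable difference is that in the middle-annulus step you discard the factor $\|f\|_2^2 = |B| \geq \eps$ when bounding $\sum_j \int_{A_j}|\hat f|^2$, which costs you an extra $1/\eps$ in the final requirement on $J$; retaining it recovers the paper's sharper bound $J = O(\eps^{-1}\log(1/\eps))$.
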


Note that there is no longer any appearance of limits in Theorem \ref{borg-fkw}, and instead of working with an infinity of scales $l$, one now works with a finite number of scales $t_1,\dots,t_J$.  Furthermore, there is a uniform bound on the number $J$ of scales involved depending on $\eps$; the arguments in \cite{density} in fact give the explicit dependence $J = O( \frac{1}{\eps} \log \frac{1}{\eps})$ on $\eps$.

Let us see why Theorem \ref{borg-fkw} implies Theorem \ref{fkw}.  Assume for contradiction that Theorem \ref{fkw} failed, then one can find a set $A \subset \R^2$ of some positive upper density $\delta>0$, and a sequence of scales $l_1 < l_2 < \dots$ going to infinity such that for each $l_j$ there are no $x,y \in A$ with $|x-y|=l_j$.  We can sparsify this sequence of scales so that $l_{j+1} \geq 2l_j$ for all $j$.  Let $\eps>0$ be sufficiently small (depending only on $\delta$), and let $J$ be as in Theorem \ref{borg-fkw}.  As $A$ has density $\delta$, one can find a radius $R > l_J$ such that $|A \cap B(0,R)| \gtrsim \delta R^2$.  If we now consider the rescaling $B \coloneqq \{ x \in [-1,1]^2: Rx \in A \}$, and define the rescaled scales $0 < t_J < \dots < t_1 \leq 1$ by $t_j \coloneqq  l_{J+1-j}/R$ for $j=1,\dots,R$, then $|B| \gtrsim \delta$ and $t_{j+1} \leq t_j/2$ for all $1 \leq j < J$, and for any $1 \leq j \leq J$ there are no points $x,y \in A_R$ with $|x-y|=t_j$.  In particular the left-hand side of \eqref{ar} vanishes for all $1 \leq j \leq J$, and one then contradicts Theorem \ref{borg-fkw} after choosing the parameters appropriately.  Note how this argument fails to give any bound on $l_0$ (as it must), despite being ``quantitative'' in nature; it gives quantitative bounds on the \emph{number} of genuinely different scales $l_j$ at which the conclusion of Theorem \ref{fkw} fails, but does not bound the \emph{magnitude} of these scales.  

We will sketch the proof of Theorem \ref{borg-fkw} in the next section.  For now, we turn to another example of Bourgain's strategy of attacking qualitative results through quantitative methods.  We focus on a specific pointwise ergodic theorem established in \cite[Theorem 1]{ergodic}:

\begin{theorem}[Bourgain's pointwise ergodic theorem along squares]\label{pet}  Let $(X, \mu)$ be a probability space with a measure-preserving transformation $T: X \to X$.  Let $f \in L^r(X,\mu)$ for some $r>1$.  Then the averages
$$ A_N f(x) \coloneqq   \frac{1}{N} \sum_{n=1}^N f( T^{n^2} x )$$
converge pointwise as $N \to \infty$ for $\mu$-almost every $x \in X$.
\end{theorem}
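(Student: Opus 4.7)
The plan is to convert this qualitative pointwise convergence statement into a quantitative maximal inequality plus an oscillation (or variational) inequality, in line with the philosophy of Section~\ref{quant-sec}, and then to verify these quantitative bounds on the combinatorial model system $(\Z,\text{counting measure})$ with $Tx = x+1$. First, by Calder\'on's transference principle it suffices to bound the model averages $\tilde A_N f(x) = \frac{1}{N}\sum_{n=1}^N f(x + n^2)$ acting on $f \in \ell^r(\Z)$. By the Banach principle, pointwise $\mu$-a.e.\ convergence on $(X,\mu)$ then follows from (a) a strong-type $\ell^r \to \ell^r$ maximal inequality $\|\sup_N |\tilde A_N f|\|_{\ell^r} \lesssim_r \|f\|_{\ell^r}$ and (b) pointwise convergence on a dense subclass. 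Since the sparse sequence $\{n^2\}$ offers no convenient dense subclass on which convergence is obvious, step (b) must itself be obtained via a quantitative oscillation inequality, which will simultaneously imply the Cauchy property $\mu$-a.e.

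Next I would attack the maximal bound via Fourier analysis on $\Z$. The multiplier of $\tilde A_N$ is the normalised Weyl sum $m_N(\xi) = \frac{1}{N}\sum_{n=1}^N e^{2\pi i n^2 \xi}$. Using Dirichlet's theorem, I would split frequency space into major arcs $\{|\xi - a/q| \leq 1/(qN)\}$ with small denominator $q$, and a minor arc complement. On the minor arcs, Weyl's inequality yields $|m_N(\xi)| \lesssim N^{-\sigma}$ for some $\sigma>0$, which more than suffices to sum over a dyadic sequence $N = 2^k$. On the major arcs, $m_N(\xi)$ factors approximately as a normalised Gauss sum $g(a,q) = \frac{1}{q}\sum_r e^{2\pi i a r^2/q}$ times a smooth Fej\'er-type factor $\Phi_N(\xi - a/q)$ coming from the continuous analogue $\int_0^1 e^{2\pi i N^2 t^2 \beta}\,dt$. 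Combining the Gauss sum cancellation $|g(a,q)| \lesssim q^{-1/2 + o(1)}$ with Calder\'on--Zygmund and Littlewood--Paley estimates for the continuous model would control the dyadic maximal function $\sup_k |\tilde A_{2^k} f|$; I would then fill the gap between consecutive dyadic $N$ by a short-range square function comparing $\tilde A_N f$ with $\tilde A_{2^k} f$ for $2^k \leq N < 2^{k+1}$.

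The substantive step is the oscillation inequality. I would split oscillations into \emph{short} oscillations inside a single dyadic block $[2^k, 2^{k+1})$, handled by a square function and the previous Fourier estimates, and \emph{long} oscillations across dyadic scales. For the long oscillations, the major arc approximation reduces the problem to a model operator whose multiplier is, up to negligible error, a sum over rationals $a/q$ of $g(a,q)$ against a Fej\'er kernel localised near $a/q$ at scale $1/N$. Dyadic pigeonholing over the denominator $q$, together with the arithmetic savings $|g(a,q)| \lesssim q^{-1/2 + o(1)}$ to sum geometrically in $q$, should reduce matters to a Littlewood--Paley/Rademacher--Menshov type oscillation inequality in the analytic scale parameter, which can be controlled on $\ell^r$ for $r > 1$.

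The main obstacle is precisely this long oscillation bound: one must simultaneously exploit arithmetic cancellation (to sum over all rationals $a/q$) and analytic orthogonality (to sum over the $O(\log N)$ dyadic scales) in a way that survives passage to $\ell^r$ for $r$ close to $1$. Setting up the ``transfer'' between these two sources of cancellation so that it can be iterated across all the required scales is the delicate technical heart of the argument, and is where Bourgain's innovation lies.
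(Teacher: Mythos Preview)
Your proposal is correct and matches the paper's approach: the paper reduces Theorem~\ref{pet} to a maximal inequality (to pass to the dense class $L^\infty$) together with the quantitative variational estimate Theorem~\ref{bve}, and then remarks that Theorem~\ref{bve} is proved exactly as you outline---by transference to $\Z$ and a circle-method (major/minor arc) analysis of the Weyl-sum multipliers. One small simplification relative to your last paragraph: in the paper's scheme the oscillation/variational bound is only needed in $L^2$ (applied to bounded $f$), with the $L^r$ burden carried entirely by the maximal inequality, so you need not push the long-oscillation estimate down to $\ell^r$ for $r$ near $1$.
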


In fact one can replace the squares here by any other polynomial with integer coefficients, but we focus on the squares for sake of concreteness.
A standard method to establish almost everywhere convergence results in ergodic theory is to establish an inequality for the associated maximal function
\begin{equation}\label{max}
 Mf(x) \coloneqq \sup_{N>0} |A_N f(x)|,
\end{equation}
and combine this with an almost everywhere convergence result for functions $f$ in a dense subclass of the original space $L^r(X,\mu)$ of interest (e.g., $L^\infty(X,\mu)$).  In \cite{ergodic} a maximal inequality for \eqref{max} was established, thus reducing matters to consideration of functions $f$ in the dense subclass $L^\infty(X,\mu)$, but new ideas were needed to handle this subclass.  Bourgain achieved this through the following variant of a maximal inequality:

\begin{theorem}[Bourgain's variational estimate]\label{bve}  Let $\lambda>1$, let $\eps>0$, and let $J$ be sufficiently large depending on $\lambda,\eps$.  Then for any $1 < N_1 < N_2 < \dots < N_J$, any $(X,\mu,T)$ as in Theorem \ref{pet}.  Then for any $f \in L^\infty(X,\mu)$, one has
$$ \sum_{j=1}^{J-1} \left\| \sup_{N_j \leq N \leq N_{j+1}: N \in Z_\lambda} |A_N f - A_{N_j} f| \right\|_{L^2(X,\mu)} \lesssim \eps J \|f\|_{L^2(X,\mu)}$$
where $Z_\lambda \coloneqq  \{ \lfloor \lambda^n\rfloor: n \in \N \}$.
\end{theorem}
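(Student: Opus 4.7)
The plan is to deduce the theorem from the stronger square function (oscillation) estimate
\begin{equation}\label{osc-plan}
\sum_{j=1}^{J-1} \left\| \sup_{N_j \leq N \leq N_{j+1}: N \in Z_\lambda} |A_N f - A_{N_j} f| \right\|_{L^2(X,\mu)}^2 \lesssim_\lambda \|f\|_{L^2(X,\mu)}^2,
\end{equation}
in which the constant is independent of $J$ and of the choice of scales $(N_j)$. Indeed, Cauchy--Schwarz converts this into
$$\sum_{j=1}^{J-1} \left\| \sup_{N_j \leq N \leq N_{j+1}: N \in Z_\lambda} |A_N f - A_{N_j} f| \right\|_{L^2(X,\mu)} \lesssim_\lambda \sqrt{J}\,\|f\|_{L^2(X,\mu)},$$
and $\sqrt{J} \leq \eps J$ as soon as $J \gtrsim_\lambda \eps^{-2}$. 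The role of the hypothesis ``$J$ sufficiently large depending on $\lambda,\eps$'' is precisely to absorb this $\sqrt{J}$ loss.

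To establish \eqref{osc-plan}, I would first invoke Calder\'on's transfer principle to reduce to the model system where $X = \Z$ with counting measure and $T$ is the unit shift. Then $A_N f$ becomes a convolution on $\Z$ with Fourier multiplier
$$m_N(\xi) = \frac{1}{N}\sum_{n=1}^N e^{2\pi i n^2 \xi}, \qquad \xi \in \R/\Z,$$
the normalised quadratic Weyl sum, so the problem is now a Fourier-analytic question about the oscillation of the family $(m_N)_{N \in Z_\lambda}$ acting by convolution on $\ell^2(\Z)$.

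The next step is a Hardy--Littlewood circle method decomposition. On minor arcs, Weyl's inequality gives $|m_N(\xi)| \lesssim N^{-c}$ for some absolute $c > 0$, and once $N$ is restricted to $Z_\lambda$ this contribution is summable in $j$ with a bound uniform in $J$. On major arcs, i.e.\ near a rational $a/q$ with $q$ small, one has the classical approximation
$$m_N(\xi) \approx S(a,q)\, \Phi_N(\xi - a/q),$$
where $S(a,q) = \frac{1}{q}\sum_{r=1}^q e^{2\pi i r^2 a/q}$ is a normalised Gauss sum and $\Phi_N(\eta) = \int_0^1 e^{2\pi i N^2 t^2 \eta}\,dt$ is the continuous analogue. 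This factorises the operator, modulo acceptable error, into an arithmetic piece (summable over $q$ using $|S(a,q)| \lesssim q^{-1/2}$) and a continuous piece to be handled by L\'epingle-style oscillation inequalities for smooth multipliers.

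The main obstacle is this last ingredient: establishing a L\'epingle-type $L^2$ oscillation inequality, uniform in $(N_j)$, for the continuous Weyl averages along the lacunary scales $N \in Z_\lambda$. Lacunarity of $Z_\lambda$ is essential, since an analogous statement over all integers $N \in [N_j,N_{j+1}]$ already fails at $L^2$. Restricted to $Z_\lambda$, however, the continuous piece behaves like a dyadic martingale-type oscillation, for which a $J$-independent $L^2$ square function bound is available from classical Littlewood--Paley theory. Combining this with the Gauss-sum decay in $q$ on the major arcs, and with the Weyl-inequality decay on the minor arcs, yields \eqref{osc-plan} and hence the theorem.
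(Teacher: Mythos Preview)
The paper does not actually prove this theorem; it merely records the one-sentence summary that the argument ``proceeds by transferring the problem to the integers, applying Fourier-analytic decompositions, and establishing some maximal inequalities of a harmonic analysis flavor,'' and refers the reader to Bourgain's original paper. Your proposal is consistent with that summary and is a reasonable expanded outline of Bourgain's actual strategy: the Calder\'on transfer to $\Z$, the circle-method splitting of the multiplier $m_N$, Weyl on minor arcs, the Gauss-sum/continuous factorisation on major arcs, and an oscillation inequality for the continuous piece.

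Two cautionary remarks. First, the uniform bound \eqref{osc-plan} you aim for (an $O_\lambda(1)$ bound on the square oscillation, independent of $J$ and of the $N_j$) is somewhat stronger than what Bourgain himself extracted in the 1989 paper; his argument produced a bound that grows slowly in $J$ (of order a power of $\log J$), which is still $o(J)$ and hence still yields the stated $\eps J$ conclusion for $J$ large. The clean $O(1)$ oscillation/variation bound you assert was obtained in later work. So while your Cauchy--Schwarz reduction is fine, be aware that the target you set yourself is sharper than what the original proof delivers. Second, several steps you pass over quickly are where the real work lies: the major/minor arc cutoff depends on $N$, so the approximation $m_N \approx S(a,q)\Phi_N$ has error terms that must themselves be controlled in oscillation norm, not just pointwise; and the ``L\'epingle-type'' input for the continuous multipliers $\Phi_N$ is not literally L\'epingle's martingale inequality but a transferred version requiring a separate square-function comparison. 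These are exactly the ``maximal inequalities of a harmonic analysis flavor'' the paper alludes to, and they occupy the bulk of Bourgain's argument.
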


Note that $L^2$ boundedness of the maximal function \eqref{max} would suffice to establish Theorem \ref{bve} were it not for the additional factor of $\eps$ on the right-hand side; thus 
We now sketch how Theorem \ref{bve} implies Theorem \ref{pet}.  By the previous discussion we may assume $f \in L^\infty(X,\mu)$, and may normalise $\|f\|_{L^\infty(X,\mu)} = 1$ and take $f$ to be real-valued.  By rounding a natural number $N$ to the nearest element $N'$ of $Z_\lambda$ for some $\lambda>1$ we see that $A_N f(x) = A_{N'} f(x) + O( \lambda-1)$.  From this it is not difficult to see that we only need to establish pointwise almost everywhere convergence of $A_N f(x), N \in Z_\lambda$ for any fixed $\lambda>1$.  Suppose this claim failed, then there would be a set $E$ of positive measure in $X$ and a $\delta>0$ such that
$$ \limsup_{N \in Z_\lambda} A_N f(x) - \liminf_{N \in Z_\lambda} A_N f(x) \geq \delta$$
for all $x \in E$.  By standard measure theory arguments one can then recursively construct an infinite sequence $N_1 < N_2 < \dots$ and a subset $E'$ of $E$ of positive measure such that
$$ \sup_{N_j \leq N \leq N_{j+1}: N \in Z_\lambda} |A_N f(x) - A_{N_j} f(x)| \gtrsim \delta$$
for all $j \geq 1$ and $x \in E'$.  This then can be used to contradict Theorem \ref{bve} after selecting $\eps$ small enough and $J$ large enough.

The bulk of the paper \cite{ergodic} is occupied with the task of establishing estimates such as Theorem \ref{bve}, which proceeds by transferring the problem to the integers, applying Fourier-analytic decompositions, and establishing some maximal inequalities of a harmonic analysis flavor.  These arguments have proven to be quite influential, and a paradigm for establishing many other pointwise ergodic theorems, but we will not survey these developments here.

\section{Dyadic pigeonholing}\label{dyadic-sec}

One of the oldest tricks in analysis is that of \emph{dyadic decomposition}: when faced with a sum or integral over a parameter ranging over a wide range of scales, first control the contribution of an individual dyadic scale (such as when the magnitude of the parameter ranges between two fixed consecutive powers $2^k, 2^{k+1}$ of two), and then sum over all dyadic scales.  For instance, we have the Cauchy condensation test: when asked to determine whether a series $\sum_{n=1}^\infty f(n)$ is absolutely convergent, where $f: \N \to \R^+$ is non-negative and non-increasing, one can break up the sum dyadically
$$ \sum_{n=1}^\infty f(n) = \sum_{k=0}^\infty \sum_{2^k \leq n < 2^{k+1}} f(n)$$
and then observe that each dyadic component can be easily bounded above and below
$$ 2^k f(2^{k+1}) \leq \sum_{2^k \leq n < 2^{k+1}} f(n) \leq 2^k f(2^k)$$
at which point one easily sees that the original series $\sum_{n=1}^\infty f(n)$ converges if and only if the condensed sum $\sum_{k=0}^\infty 2^k f(2^k)$ converges.  While both sums are infinite, in practice the latter sum is significantly more tractable than the former; for instance any polynomial improvements $n^{-\eps}$ to bounds for the original sequence $f(n)$ leads to exponential improvements $2^{-\eps k}$ in the bounds for the new sequence $2^k f(2^k)$.  

A surprisingly useful variant of this method was used repeatedly by Bourgain in many problems, in which dyadic decomposition is combined with the pigeonhole principle to locate a single ``good'' scale in which to run additional arguments.  We refer to this combination of dyadic decomposition and the pigeonhole principle as \emph{dyadic pigeonholing}.  

The quantitative result claimed in Theorem \ref{borg-fkw} is already well suited to a dyadic pigeonholing argument, since the scales $t_j$ in that argument are already at least dyadically separated, and we can sketch its proof as follows.  Using the Fourier transform $\hat f(\xi) \coloneqq \int_{\R^d} f(x) e^{2\pi i x \cdot \xi}\ dx$, one can rewrite the left-hand side of \eqref{ar} as
$$ \int_{\R^2} |\hat 1_B(\xi)|^2 \hat \sigma( t_j \xi)\ d\xi,$$
where $\hat \sigma(\xi) \coloneqq \int_{S^1} e^{2\pi i \omega \cdot \xi}\ d\sigma(\omega)$ is the Fourier transform of the surface measure $d\sigma$.  One can split this integral into the contribution of the ``low frequencies''
\begin{equation}\label{low-fourier}
\int_{|\xi| \leq \delta / t_j} |\hat 1_B(\xi)|^2 \hat \sigma( t_j \xi)\ d\xi,
\end{equation}
the ``medium frequencies''
\begin{equation}\label{medium-fourier}
\int_{\delta/t_j \leq |\xi| \leq 1 / \delta t_j} |\hat 1_B(\xi)|^2 \hat \sigma( t_j \xi)\ d\xi,
\end{equation}
and the ``high frequencies''
\begin{equation}\label{high-fourier}
\int_{|\xi| > 1 / \delta t_j} |\hat 1_B(\xi)|^2 \hat \sigma( t_j \xi)\ d\xi,
\end{equation}
where $0 < \delta = \delta(\eps) \leq 1/2$ is a small quantity depending on $\eps$ to be chosen later.
For the contribution \eqref{low-fourier} of the low frequencies, the factor $\hat \sigma(t_j \xi)$ is close to $1$, and it is not difficult to obtain a lower bound on this quantity that is $\gtrsim |B|^2 \geq \eps^2$ if $\delta$ is small enough.  For the contribution \eqref{high-fourier} of the high frequencies, the factor $\hat \sigma(t_j \xi)$ is quite small, and one can show that the contribution of this term is negligible, again for $\delta$ small enough.  The problematic term is the contribution \eqref{medium-fourier}, which one can of course upper bound by
$$ \int_{\delta/t_j \leq |\xi| \leq 1 / \delta t_j} |\hat 1_B(\xi)|^2\ d\xi.$$
By Plancherel's theorem one can upper bound this crudely by
$$ \lesssim \int_{\R^2} |\hat 1_B(\xi)|^2\ d\xi = |B|,$$
but this bound is too weak compared to the lower bound of $\gtrsim |B|^2$ one can obtain for the main term \eqref{low-fourier}.  However, note that because of the lacunarity hypothesis $t_{j+1} \leq t_j/2$, the annuli $\{ \delta/t_j \leq |\xi| \leq 1 / \delta t_j\}$ only overlap with multiplicity $O(\log \frac{1}{\delta})$, hence the Plancherel bound actually gives
\begin{equation}\label{sumj}
 \sum_{j=1}^J \int_{\delta/t_j \leq |\xi| \leq 1 / \delta t_j} |\hat 1_B(\xi)|^2\ d\xi \lesssim \log \frac{1}{\delta} |B|
\end{equation}
and hence by the pigeonhole principle we can find a scale $t_j$ for which
$$ \sum_{j=1}^J \int_{\delta/t_j \leq |\xi| \leq 1 / \delta t_j} |\hat 1_B(\xi)|^2\ d\xi \lesssim \frac{\log \frac{1}{\delta}}{J} |B|.$$
For $J$ large enough, one can use this ``good'' scale to make the contribution \eqref{medium-fourier} of the medium frequencies small compared to that of the high frequencies, and one can conclude the proof of Theorem \ref{borg-fkw}.

Another typical instance of dyadic pigeonholing occurs in \cite[Lemma 2.15]{besicovitch} when Bourgain establishes new lower bounds on the Hausdorff dimension of Besicovitch sets $E$ (compact subsets of $\R^d$ that contain a unit line segment $\ell_\omega$ in every direction).  To lower bound the Hausdorff such a set by $\alpha$, one would have to establish a lower bound for the Hausdorff content $\sum_i r_i^{\alpha-\eps}$ whenever one covers the set $E$ by small balls $B(x_i,r_i)$.  By rounding up each $r_i$, we can assume without loss of generality that each $r_i$ is a power of two (dyadic pigeonholing), and replace balls by cubes; we can then group together the cubes of a given size and obtain a covering
$$ E \subset \sum_{j \geq j_0} B_j$$
where $j_0$ is large and $B_j$ is a union of cubes of sidelength $2^{-j}$.  To get the required lower bound it would then suffice to show that for at least one of the scales $j$, the number of cubes used to form $B_j$ is $\gtrsim 2^{j(\alpha-\eps)}$.  But which scale $j$ to use?  Since the $B_j$ cover each $\ell_\omega$, we have
$$ \sum_{j \geq j_0} {\mathcal H}^1( \ell_\omega \cap B_j ) \geq 1$$
for each of the unit line segments $\ell_\omega$, hence on integrating over all directions $\omega \in S^{d-1}$ using Fubini's theorem we have
$$ \sum_{j \geq j_0} \int_{S^{d-1}} {\mathcal H}^1( \ell_\omega \cap B_j )\ d\omega \gtrsim 1.$$
By the pigeonhole principle, we can then find a scale $j \geq j_0$ for which
$$ \int_{S^{d-1}} {\mathcal H}^1( \ell_\omega \cap B_j )\ d\omega \gtrsim \frac{1}{j^2}$$
(say).  The quantity $1/j^2$ is quite ``large'' compared to the scale $2^{-j}$, and this estimate asserts (roughly speaking) that ``many'' of the $\ell_\omega$ have ``large'' intersection with the $B_j$.  Having selected such a good scale $\delta = 2^{-j}$, Bourgain was able to proceed to establish new bounds on $\alpha$ by estimation of an expression now known as the \emph{Kakeya maximal function} associated to this scale; see the companion paper \cite{demeter} for further discussion of this function and its applications to the Fourier restriction problem.

The dyadic pigeonholing method does not need to explicitly involve powers of two (or other lacunary sequences of scales).  One of Bourgain's earlier uses of the method appears\footnote{We thank Assaf Naor for this reference.} in his work \cite[\S 5]{lipschitz} on quantitative versions of a Lipschitz embedding theorem of Ribe \cite{ribe}, where at one point in the argument he has a Lipschitz function $F: E \to Y$ from a finite-dimensional normed space $E$ to a Banach space $Y$, and wishes to locate a scale $t$ at which the Poisson integral $F * P_t$ of $F$ has a large directional derivative $\partial_a(F*P_t)(x)$ at one point $x \in E$.  This scale $t$ roughly corresponds to the spatial scale $2^{-j}$ in the previous discussion.  To locate a good scale $t$, Bourgain first observes from the triangle inequality that
\begin{equation}\label{fpts}
 \|\partial_a(F*P_t)\| * P_s(x) \geq \| \partial_a(F * P_{t+s}) \|(x)
\end{equation}
and hence the integral quantity $\int_E \|\partial_a(F*P_t)\|(x)\ dx$ is non-increasing in $t$.  On the other hand, the specific construction of the function $F$ in \cite{lipschitz} provided an upper bound on this integral that is uniform in $t$.  Applying the pigeonhole principle, one can then find a scale $t>0$ for which one has
$$\int_E \|\partial_a(F*P_t)\|(x)\ dx \approx \int_E \|\partial_a(F*P_{(R+1)t})\|(x)\ dx$$
for some moderately large parameter $R>0$, and where we shall be vague about the precise meaning of the symbol $\approx$; comparing this with \eqref{fpts} and other properties of $F$ eventually gives the desired lower bound on $\partial_a(F*P_t)(x)$.  See the companion paper \cite{ball} for further discussion of the impact of Bourgain's ``Ribe program''.

Dyadic pigeonholing makes a small but important role in an important result \cite{nls} of Bourgain on the energy-critical nonlinear Schr\"odinger equation, discussed in more detail in Kenig's article \cite{kenig}:

\begin{theorem}[Global regularity for energy-critical NLS for radial data]  Let $u_0 \in H^1(\R^3)$ be smooth and spherically symmetric, then there is a unique smooth finite energy global solution $u: \R \times \R^3 \to \C$ to the nonlinear Schr\"odinger equation $i \partial_t u + \Delta u = |u|^4 u$ with initial data $u(0,x) = u_0(x)$.
\end{theorem}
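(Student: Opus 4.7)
The plan is to reduce global regularity to an a priori bound on a scale-invariant Strichartz norm. Standard Strichartz estimates in three dimensions yield local well-posedness of $i\partial_t u + \Delta u = |u|^4 u$ in $H^1(\R^3)$, together with the blow-up criterion that if the maximal forward time of existence $T_+$ is finite, then a critical spacetime norm such as $\|u\|_{L^{10}_{t,x}([0,T_+)\times \R^3)}$ must be infinite. It thus suffices to prove that this norm stays bounded on any compact subinterval in terms of the conserved energy $E(u_0) = \frac{1}{2}\int|\nabla u_0|^2 + \frac{1}{6}\int |u_0|^6$.

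The two cornerstones of the a priori control are conservation of energy and a Lin--Strauss type Morawetz inequality adapted to radial solutions, roughly of the shape
\begin{equation*}
\int_0^T \int_{\R^3} \frac{|u(t,x)|^6}{|x|}\,dx\,dt \lesssim E(u_0).
\end{equation*}
Radial symmetry forces any energy concentration to occur at the spatial origin, and the $1/|x|$ weight in the Morawetz bound is precisely what penalises such concentration. Heuristically these two estimates together rule out self-similar blow-up; the real work is converting this heuristic into a quantitative exclusion of all blow-up modes.

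This is where dyadic pigeonholing (Section \ref{dyadic-sec}) enters, inside an induction on the conserved energy. Assuming the theorem fails, I would consider a counterexample with essentially minimal energy and examine its behaviour near a hypothetical blow-up time $T_+$. Decomposing $[0,T_+)$ into a lacunary sequence of time intervals $I_k$ accumulating at $T_+$, the Morawetz bound distributes a uniformly finite spacetime budget across many such intervals, so pigeonholing furnishes at least one $I_k$ on which a suitably localised $L^6_{t,x}$ norm of $u$ is very small. On such a good interval the nonlinearity behaves perturbatively via Strichartz; combined with a spatial dyadic decomposition around the origin, one can split $u$ into an inner piece of strictly smaller energy (handled by the induction hypothesis) and an outer piece whose evolution is controlled by radiality and dispersive decay. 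Matching the two regimes produces a Strichartz bound that extends past $T_+$, contradicting the finiteness of $T_+$.

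The main obstacle will be closing the induction so that the energy drop at each stage is both quantitative and uniform, and ensuring that the perturbative estimates on the pigeonholed good scale are strong enough to propagate across neighbouring scales without losing logarithmic factors that would destroy the induction. Controlling the interaction between the small-scale perturbative regime and the large-scale induction hypothesis requires careful bookkeeping of energy transfer across scales, and this is where the full force of Strichartz theory, together with the special geometry of radial functions in three dimensions, must be deployed.
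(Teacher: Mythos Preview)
The paper does not actually prove this theorem; it states it and then isolates a single step from Bourgain's argument in \cite{nls} as an illustration of dyadic pigeonholing, referring the reader to \cite{kenig} for a fuller account. So there is no complete ``paper's proof'' to compare against. Your outline does correctly identify the large-scale architecture of Bourgain's approach: reduction to an a priori $L^{10}_{t,x}$ bound via the local theory, conservation of energy, the Morawetz inequality, induction on energy, and the role of radiality in forcing concentration to the origin.

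However, the dyadic pigeonholing you describe --- selecting a good \emph{time} interval from a lacunary family on which the Morawetz budget is small --- is not the step the paper highlights. What the paper singles out is a pigeonholing in \emph{frequency}. Having established a concentration bound $\|P_{N_{j_s}} u(t_{j_s})\|_{L^2} \gtrsim N_{j_s}^{-1}$ at one time, Bourgain must propagate it to a nearby time $t_{j_r}$. Full $L^2$ mass conservation is useless here because of potentially unbounded low-frequency mass; instead one tracks $\|P_{\geq N} u(t)\|_{L^2}^2$ for a cutoff $N < N_{j_s}$. The time derivative of this quantity is dominated by interactions at frequencies $\sim N$, and summing the resulting bound over dyadic $N$ (in the spirit of \eqref{sumj}) and pigeonholing yields a good cutoff $N$ at which the high-frequency mass is approximately conserved. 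That frequency-domain pigeonholing, not a time-interval one, is the mechanism the paper wants exhibited.

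Beyond this mismatch, your proposal is by its own admission only a plan: the final paragraph names ``closing the induction'' and ``bookkeeping of energy transfer across scales'' as the main obstacles without resolving them. These are precisely where the substance of \cite{nls} lies, so what you have written is a reasonable orientation but not a proof.
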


At one stage \cite[\S 4]{nls} in the (rather intricate) argument, a solution $u$ is constructed to exhibit a concentration property at a certain time $t_{j_s}$ and frequency $N_{j_s}$, in that (suppressing a parameter $\eta$ that is not relevant for the current discussion)
\begin{equation}\label{pnu}
 \| P_{N_{j_s}} u(t_{j_s}) \|_{L^2(\R^3)} \gtrsim N_{j_s}^{-1},
\end{equation}
where $P_{N_{j_s}}$ is a Fourier projection of Littlewood-Paley type to the frequency region $\{ \xi: |\xi| \sim N_{j_s}\}$.  In Bourgain's argument it is necessary to propagate this lower bound from time $t_{j_s}$ to a nearby time $t_{j_r}$.  The NLS equation conserves the full $L^2$ mass $\| u(t) \|_{L^2(\R^3)}^2$, but this cannot be directly used here due to the solution $u$ potentially having an extremely large amount of $L^2$ mass at low frequencies.  To resolve this, Bourgain applies a Fourier truncation operator $P_{\geq N}$ restricting to frequencies $|\xi| \gtrsim N$ for some $N < N_{j_s}$, and exploits the approximately conserved nature of the high-frequency portion $\| P_{\geq N} u(t) \|_{L^2(\R^3)}^2$ of the mass, by a computation of the time derivative
$$ \partial_t \| P_{\geq N} u(t) \|_{L^2(\R^3)}^2.$$
There are several components to this time derivative, but the dominant contribution comes from the portion of the solution $u$ residing at frequency scales $M$ comparable to $N$ (intuitively, this reflects the potential exchange of mass in the frequency domain between frequency modes of magnitude just below $N$, and frequency modes of magnitude just above $N$).  For any given $N$, the upper bounds on this time derivative could overwhelm the lower bound in \eqref{pnu}; however, by obtaining an estimate for the sum over a range of $N$ (in a manner analogous to \eqref{sumj}) and applying the pigeonhole principle, one can locate a ``good scale'' $N$ for which one has satisfactory control on the derivative of the mass.  This idea to use the dyadic pigeonholing method to establish approximate conservation laws has wide application; for instance, it was used by Rodgers and myself recently to resolve the Newman conjecture \cite{newman} in analytic number theory.

In some cases one can use more sophisticated tools than the pigeonhole principle to locate a good scale.  One example of this arises when establishing Bourgain's quantitative refinement \cite{roth} of Roth's theorem \cite{roth-thm}:

\begin{theorem}[Bourgain-Roth theorem]  Let $N \geq 10$, and let $A \subset \{1,\dots,N\}$ be a set containing no three-term arithmetic progressions.  Then $|A| \lesssim \frac{(\log\log N)^{1/2}}{\log^{1/2} N} N$.
\end{theorem}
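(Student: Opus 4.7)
The plan is to run a density increment argument in the spirit of Roth. Suppose for contradiction that $A \subset \{1,\dots,N\}$ has $|A| = \delta N$ with $\delta$ exceeding the claimed threshold and contains no nontrivial three-term arithmetic progressions. The standard Fourier identity for the count of three-term progressions in $A$ forces the Fourier transform $\hat{1_A}(\xi)$ to be anomalously large at many nonzero frequencies; quantitatively, one extracts a ``large spectrum'' $\Lambda := \{ \xi \neq 0 : |\hat{1_A}(\xi)| \geq \alpha \delta N \}$ with $\alpha \gtrsim \delta$, whose size is controlled by Parseval.

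The classical Roth argument uses a single $\xi \in \Lambda$ to produce a density increment of order $\delta^2$ on a long ordinary arithmetic progression, yielding the well-known bound $\delta \lesssim 1/\log \log N$. Bourgain's refinement is to harvest many frequencies of $\Lambda$ simultaneously by passing to a Bohr set
$$ B(S,\rho) := \{ n \in \{1,\dots,N\} : \|ns\| < \rho \text{ for all } s \in S \} $$
(where $\|x\|$ denotes distance from $x$ to the nearest integer) whose defining frequency set $S$ captures a substantial portion of $\Lambda$. On some translate of $B(S,\rho)$ one then secures the much larger density increment of order $\alpha$. Dyadic pigeonholing in the spirit of Section~\ref{dyadic-sec} enters here: after splitting $\Lambda$ into dyadic strata according to the size of $|\hat{1_A}(\xi)|$, one selects a single stratum carrying a significant share of the spectral mass and extracts from it a relatively low-dimensional subset $S$ that is nevertheless spectrally rich enough to force the density jump.

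Iterating this step on a nested chain of Bohr sets drives the density above $1$ after $O(\log(1/\delta))$ doublings, with each doubling requiring roughly $O(1/\delta)$ single iterations and enlarging the dimension of the current Bohr set. The competing constraint is that each iteration shrinks the Bohr set by a factor depending on $|S|$ and $\rho$, so the final Bohr set must still be nonempty; careful bookkeeping of parameters yields the requirement $N \gtrsim \exp(c \delta^{-2} \log(1/\delta))$, which inverts to the claimed bound $\delta \lesssim (\log \log N)^{1/2}/\log^{1/2} N$. The main obstacle, and indeed the technical heart of the argument, is the handling of the Bohr sets themselves: they are not genuine arithmetic progressions, so to perform density-increment and averaging operations on them one must develop a theory of \emph{regular} Bohr sets showing that $B(S,\rho)$ is approximately invariant under sufficiently small translations. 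This regularity theory, together with the dyadic-pigeonhole selection of the frequency set $S$, is what provides the gain over Roth's original argument.
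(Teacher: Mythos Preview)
The paper does not actually prove this theorem; it only invokes it as an illustration of a refinement of dyadic pigeonholing, and the sole ingredient discussed is the construction of \emph{regular} Bohr sets.  Specifically, the paper observes that $|B(S,\rho)|$ is monotone in $\rho$ with a doubling bound $|B(S,2\rho)| \lesssim O(1)^{|S|} |B(S,\rho)|$, so the distributional derivative of $\log |B(S,\rho)|$ has bounded variation on any dyadic interval, and then applies the Hardy--Littlewood maximal inequality (rather than the naive pigeonhole principle) to locate a radius $\rho$ at which $|B(S,\rho')| = \exp(O(|S|\,|\rho'-\rho|/\rho))\,|B(S,\rho)|$ for all $\rho'$.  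That is the entire content of the paper's treatment.

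Your outline of the density increment is broadly accurate and correctly names regular Bohr sets as the technical heart, with bookkeeping that inverts to the stated bound.  However, your placement of the Section~\ref{dyadic-sec} idea is off: you locate the pigeonholing in a dyadic stratification of the large spectrum $\Lambda$, whereas the paper's whole point in citing this theorem is that the good-scale selection occurs in the \emph{radius parameter} $\rho$ of the Bohr set, via the maximal inequality rather than crude pigeonholing.  In Bourgain's actual argument the spectral side is handled more directly (one large Fourier coefficient already forces a density increment of order $\delta$ on a translate of a Bohr set of bounded rank increase), and it is the regularity of the ambient Bohr set---obtained exactly by the Hardy--Littlewood step the paper describes---that makes the iteration go through without the losses that plague ordinary progressions.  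So your sketch is a reasonable high-level summary of the 1999 paper, but the connection to the present article's theme should be rerouted from the spectrum to the Bohr radius.
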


A key innovation in this paper was to manipulate \emph{Bohr sets}
$$ B(S,\rho) \coloneqq \{ n \in \Z: |n| \leq N; \| n \theta \| \leq \rho \forall \theta \in S \}$$
where $\rho>0$ is a ``radius'', $S$ is a collection of frequencies $\theta \in \R/\Z$, and $\|x\|$ denotes the distance of $x$ to the nearest integer.  These sets generalize the long arithmetic progressions that appear prominently in previous work in this area such as \cite{roth-thm}, and are well adapted to the Fourier-analytic methods used to establish Roth-type theorems.  However, a key difficulty arises due to the discontinuous nature of the Bohr sets in $\rho$; in particular, if $\rho'$ is close to $\rho$, there is no \emph{a priori} reason why the cardinality of $B(S,\rho')$ should be close to that of $B(S,\rho)$.  However, the cardinality $|B(S,\rho)|$ is clearly non-decreasing in $\rho$, and a simple covering argument gives a doubling bound
$$ |B(S,2\rho)| \lesssim O(1)^{|S|} |B(S,\rho)|.$$
In particular, the distributional derivative $\frac{d}{d\rho} \log |B(S,\rho)|$ is a measure of total variation $O( |S| )$ on any dyadic interval $[\rho_0, 2\rho_0]$.  Combining this with the Hardy-Littlewood maximal inequality, one can conclude that every interval $[\rho_0,2\rho_0]$ contains a radius $\rho$ where the Bohr set $B(S,\rho)$ is ``regular'' in the sense that
$$ |B(S,\rho')| = \exp\left( O\left( |S| \frac{|\rho'-\rho|}{|\rho|} \right)\right) |B(S,\rho)|$$
for all $\rho' > 0$; see for instance \cite[Lemma 4.25]{tao-vu} for this version of the construction.  Regular Bohr sets are now a standard tool in modern additive combinatorics.

\section{Random translations}\label{random-sec}

Consider the interval $I = [0,1/N]$ in the unit circle $\R/\Z$ for some large integer $N$.  Then $I$ is much smaller than $\R/\Z$, but we can cover $\R/\Z$ by $1/|I|=N$ translates $I+j/N, j=0,\dots,N-1$ of $\R/\Z$.  Of course, most subsets $E$ of $\R/\Z$ will not have this perfect tiling property.  However, by using random translations of $E$, one can achieve something fairly close to a perfect tiling:

\begin{lemma}[Random translations]\label{translate}  Let $G = (G,\cdot)$ be a compact group (not necessarily abelian) with Haar probability measure $\mu$.  Let $E$ be a measurable subset of $G$, and let $N$ be a natural number.  Then there exist translates $g_1 E, \dots, g_N E$ of $E$ by some shifts $g_1,\dots,g_N \in G$ with
$$ \mu( g_1 E \cup \dots \cup g_N E ) \geq 1 - (1-\mu(E))^N.$$
\end{lemma}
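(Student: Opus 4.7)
The plan is to use a first-moment (averaging) argument over random independent translations, choosing the shifts $g_1,\dots,g_N$ independently from Haar measure $\mu$ on $G$ and showing that the \emph{expected} measure of the union is at least the claimed quantity; a standard ``probabilistic method'' step then produces a deterministic choice achieving that bound.

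More concretely, first I would sample $g_1,\dots,g_N$ independently from $\mu$, and for a fixed $x \in G$ analyse the event $\{x \notin g_1 E \cup \dots \cup g_N E\}$. The point $x$ fails to lie in $g_i E$ precisely when $g_i^{-1} x \notin E$, and by left-invariance of Haar measure (applied to the map $g \mapsto g^{-1} x$) the random element $g_i^{-1} x$ is itself $\mu$-distributed. Hence $\P(x \notin g_i E) = 1 - \mu(E)$, and by independence of the $g_i$,
\begin{equation*}
\P(x \notin g_1 E \cup \dots \cup g_N E) = (1-\mu(E))^N.
\end{equation*}

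Next, I would integrate this pointwise identity in $x$ against $\mu$ and swap the order of integration via Fubini (the event is jointly measurable since group multiplication is continuous, so $1_{g_i E}(x)$ is measurable in $(g_i,x)$). This yields
\begin{equation*}
\E\, \mu\bigl(G \setminus (g_1 E \cup \dots \cup g_N E)\bigr) = \int_G \P(x \notin g_1 E \cup \dots \cup g_N E)\, d\mu(x) = (1-\mu(E))^N,
\end{equation*}
and equivalently $\E\, \mu(g_1 E \cup \dots \cup g_N E) = 1 - (1-\mu(E))^N$. Since the random variable cannot always fall strictly below its mean, some realisation $(g_1,\dots,g_N)$ attains at least this value, giving the desired translates.

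There is no real obstacle here: the only points requiring care are the measurability of the joint event (immediate from continuity of multiplication and Fubini on the compact Hausdorff group) and the use of left-invariance of $\mu$ to reduce $\P(g_i^{-1} x \in E)$ to $\mu(E)$, both of which are routine. The conceptual content is simply that independent random shifts decorrelate the events ``$x \notin g_i E$'' to give a product $(1-\mu(E))^N$, exactly as in the classical coupon-collector / union-covering calculation on a finite set.
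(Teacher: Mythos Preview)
Your proof is correct and follows essentially the same approach as the paper: draw $g_1,\dots,g_N$ independently from Haar measure, use Fubini and independence to compute $\E\,\mu(g_1E\cup\dots\cup g_NE)=1-(1-\mu(E))^N$, and conclude by the probabilistic method. The only cosmetic difference is that the paper writes out the indicator $1-\prod_i 1_{g_i(G\setminus E)}(x)$ explicitly before taking expectations, whereas you compute the pointwise probability $\P(x\notin g_iE)$ first and then integrate; both rely on the bi-invariance (and hence inversion-invariance) of Haar measure on a compact group.
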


\begin{proof}  We use the probabilistic method. Let $g_1,\dots,g_N$ be drawn independently at random from $G$ using the Haar measure $\mu$.  Then by the Fubini-Tonelli theorem we have
\begin{align*}
 {\mathbf E} \mu( g_1 E \cup \dots \cup g_N E ) &= \int_G {\mathbf E} 1_{g_1 E \cup \dots \cup g_N E}(x)\ d\mu(x) \\
&= \int_G {\mathbf E} (1 - \prod_{i=1}^N 1_{g_i (G \backslash E)}(x))\ d\mu(x) \\
&= \int_G (1 - \prod_{i=1}^N {\mathbf E} 1_{g_i (G \backslash E)}(x))\ d\mu(x) \\
&= \int_G (1 - \prod_{i=1}^N \mu(G \backslash E))\ d\mu(x) \\
&= 1 - (1-\mu(E))^N
\end{align*}
and the claim follows.
\end{proof}

In particular, if $\mu(E) \sim 1/N$, then we can find $N$ translates $g_1 E, \dots, g_N E$ of $E$ whose union has measure $\sim 1$, thus these translates behave as if they are disjoint ``up to constants''.  We observe that the same claim also holds for any homogeneous space $G/H$ of a compact group $G$ (with the attendant Haar probability measure), simply by lifting subsets of that homogeneous space back up to $G$.

Lemma \ref{translate} allows one in many cases to reduce the analysis of ``small'' subsets of a compact group $G$ (or a homogenous space $G/H$ of $G$) to the analysis of ``large'' sets, particularly if the problem in question enjoys some sort of translation symmetry with respect to the group $G$.  This idea was for instance famously exploited by Stein \cite{stein} in his maximum principle equating almost everwhere convergence results for translation-invariant operators with weak-type $(p,p)$ maximal inequalities.  In \cite[\S 6]{besicovitch}, Bourgain noted that these techniques could also be combined with the factorization theory of Pisier, Nikishin, and Maurey \cite{pisier} (which Bourgain had previously used for instance in \cite{zonotopes}), although it has subsequently been realized that the arguments can be formulated without explicit reference to that theory.  Specifically, in the context of restriction estimates for the sphere, Bourgain observed

\begin{proposition}  Suppose that $d \geq 2$ and $1 < p < 2$ is such that one has the restriction estimate
\begin{equation}\label{don}
 \| \hat f \|_{L^1(S^{d-1}, d\sigma)} \lesssim_{p,d} \|f\|_{L^p(\R^d)}
\end{equation}
for all Schwartz functions $f \colon \R^d \to \C$, where $\sigma$ is normalized surface measure on the sphere $S^{d-1}$.  Then one can automatically improve this to the stronger estimate
$$ \| \hat f \|_{L^{p,\infty}(S^{d-1}, d\sigma)} \lesssim_{p,d} \|f\|_{L^p(\R^d)}.$$
\end{proposition}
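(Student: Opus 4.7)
Fix Schwartz $f$ and $\lambda>0$, and set $E_\lambda \coloneqq \{\omega \in S^{d-1} : |\hat f(\omega)| > \lambda\}$ with $\alpha \coloneqq \sigma(E_\lambda)$. The weak-type bound to prove is $\alpha \lesssim_{p,d} (\|f\|_{L^p(\R^d)}/\lambda)^p$. The hypothesis \eqref{don} enjoys a rotational symmetry: for any $R \in SO(d)$, the function $f_R(x) \coloneqq f(R^{-1}x)$ has $\|f_R\|_{L^p} = \|f\|_{L^p}$ and $\widehat{f_R}(\omega) = \hat f(R^{-1}\omega)$, so its level set above $\lambda$ is the rotate $R \cdot E_\lambda$, again of measure $\alpha$. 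Since the sphere is the homogeneous space $SO(d)/SO(d-1)$, I would apply Lemma~\ref{translate} with $N \coloneqq \lceil 1/\alpha\rceil$ to obtain rotations $R_1,\dots,R_N \in SO(d)$ with
$$\sigma\bigl(R_1 E_\lambda \cup \dots \cup R_N E_\lambda\bigr) \geq 1-(1-\alpha)^N \gtrsim 1,$$
so that on a positive-measure subset of $S^{d-1}$ at least one of the rotated copies of $\hat f$ exceeds $\lambda$ in absolute value.

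Next I would form the random superposition $F \coloneqq \sum_{i=1}^N c_i f_{R_i}$ with independent Rademacher signs $c_i \in \{\pm 1\}$, and play its two sides against each other using the Khintchine inequality. On the Fourier side, Khintchine in $L^1$ of the signs yields the pointwise lower bound
$$\E_c |\hat F(\omega)| \gtrsim \Bigl(\sum_i |\hat f(R_i^{-1}\omega)|^2\Bigr)^{1/2} \geq \max_i |\hat f(R_i^{-1}\omega)|;$$
for $\omega$ in the near-cover just constructed, at least one term of the maximum exceeds $\lambda$, so integrating gives $\E_c \|\hat F\|_{L^1(d\sigma)} \gtrsim \lambda$. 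On the physical side, Khintchine in $L^p$ together with the embedding $\ell^p \subset \ell^2$ (equivalently $(\sum|a_i|^2)^{p/2} \leq \sum |a_i|^p$, valid for $p \leq 2$) gives
$$\E_c \|F\|_{L^p(\R^d)}^p \lesssim_p \int_{\R^d} \Bigl(\sum_i |f_{R_i}(x)|^2\Bigr)^{p/2}\,dx \leq \sum_i \|f_{R_i}\|_{L^p}^p = N \|f\|_{L^p}^p.$$

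Applying the hypothesis \eqref{don} to each realization of $F$ gives $\|\hat F\|_{L^1}^p \lesssim_{p,d} \|F\|_{L^p}^p$, and taking expectations and invoking Jensen's inequality (valid because $p\geq 1$) chains the two bounds into
$$\lambda^p \lesssim \bigl(\E_c \|\hat F\|_{L^1}\bigr)^p \leq \E_c \|\hat F\|_{L^1}^p \lesssim_{p,d} \E_c \|F\|_{L^p}^p \lesssim_p N\|f\|_{L^p}^p \sim \alpha^{-1}\|f\|_{L^p}^p,$$
which rearranges to $\alpha \lesssim_{p,d} (\|f\|_{L^p}/\lambda)^p$, the desired weak-type inequality. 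The main obstacle is obtaining the sharp $N^{1/p}$ factor on the physical side: a naive triangle bound would only give $\|F\|_{L^p} \leq N\|f\|_{L^p}$ and lose the result entirely. It is precisely the square-function gain furnished by the Rademacher signs (together with the hypothesis $p < 2$) that matches the near-disjointness of the rotated level sets produced by Lemma~\ref{translate}, making this a prototypical example of how random translations convert a ``large-set'' restriction estimate into a ``small-set'' weak-type one.
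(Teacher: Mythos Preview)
Your proof is correct and follows essentially the same strategy as the paper's: use Lemma~\ref{translate} on the homogeneous space $S^{d-1}\cong SO(d)/SO(d-1)$ to near-cover the sphere by $N\sim 1/\sigma(E_\lambda)$ rotates of the level set, form a random $\pm1$ superposition, and invoke Khintchine on both sides (the $p<2$ embedding $\ell^p\subset\ell^2$ being the source of the crucial $N^{1/p}$ gain). The only cosmetic difference is that the paper phrases the conclusion as ``with positive probability both $\|F\|_{L^p}\lesssim_p N^{1/p}$ and $\sigma(\{|\hat F|\gtrsim\lambda\})\gtrsim 1$ hold'', whereas you work with expectations throughout and close the argument via Jensen; your version is in fact marginally cleaner since it sidesteps the need to intersect two high-probability events.
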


\begin{proof}  (Sketch) We can normalize $\|f\|_{L^p(\R^d)}=1$.  Let $\lambda > 0$, and let $E \subset S^{d-1}$ denote the level set
$$ E \coloneqq  \{ \omega \in S^{d-1}: |\hat f(\omega)| \geq \lambda \}.$$
Our task is to show that $\sigma(E) \lesssim_{p,d} \lambda^{-p}$.  A direct application of \eqref{don} only gives the estimate $\sigma(E) \lesssim_{p,d} \lambda^{-1}$, which is inferior when $\lambda$ is large.  However, if we let $N$ be an integer with $N \sim 1/\sigma(E)$, then by Lemma \ref{translate} (applied to the homogeneous space $S^{d-1} \equiv SO(d)/SO(d-1)$) one can find rotations $R_1 E, \dots, R_N E$ of $E$ with $\sigma(\bigcup_{i=1}^N R_i E) \sim 1$.  If one then considers the random sum
$$ F(x) \coloneqq  \sum_{i=1}^N \epsilon_i F(R_i x)$$
where the $\epsilon_i$ are independent random signs $\{-1,+1\}$ (or random gaussian variables), a routine application of Khintchine's inequality reveals that with positive probability, one has
$$ \|F\|_{L^p(\R^d)} \lesssim_p N^{1/p}$$
and
$$ \sigma(\{ \omega \in S^{d-1}: |\hat F(\omega)| \gtrsim \lambda \}) \gtrsim \sigma(\bigcup_{i=1}^N R_i E) \gtrsim 1$$
and hence by \eqref{don}
$$ \lambda \lesssim_{p,d} N^{1/p}$$
which gives the required estimate $\sigma(E) \lesssim_{p,d} \lambda^{-p}$.
\end{proof}

Variations of this argument also appear at several other locations in \cite{besicovitch}.

\section{Metric entropy and concentration of measure}\label{entropy-sec}

If $X$ is a random variable (which for sake of discussion we take to be real-valued) with finite second moment, so that the mean $\E X$ and the variance $\Var(X)$ are both finite, then Chebyshev's inequality asserts that
$$ \P( |X - \E X| \geq \lambda \sqrt{\Var(X)} ) \leq \frac{1}{\lambda^2}$$
for any $\lambda>0$, thus there the random variable $X$ exhibits some \emph{concentration of measure} to the interval $[\E X - \sqrt{\Var(X)}, \E X + \sqrt{\Var(X)}]$, in the sense that the probability of lying far outside this interval drops at a polynomial rate to the (normalized) distance to this interval).  In many situations (particularly if $X$ is somehow ``influenced'' by many ``independent sources of randomness''), the decay is in fact far stronger than the $1/\lambda^2$ decay; exponential or even Gaussian type decay can often be obtained.  For instance, if $X$ is a Gaussian variable, then we have
\begin{equation}\label{xex}
 \P( |X - \E X| \geq \lambda \sqrt{\Var(X)} ) \lesssim \exp( - c\lambda^2)
\end{equation}
for all $\lambda>0$ and some absolute constant $c>0$.  Or, if $X = \sum_{i=1}^n X_i$ is the sum of independent random variables $X_i$ that each lie in some interval $[a_i,b_i]$, then the classical Hoeffding inequality gives
$$ \P\left( |X - \E X| \geq \lambda \sqrt{\sum_{i=1}^n (b_i-a_i)^2} \right) \lesssim \exp( - c\lambda^2).$$
Many further \emph{concentration of measure} inequalities of this type are available; see for instance the text \cite{ledoux} for a systematic discussion.

One can combine these sorts of concentration of measure inequalities to control the large deviations of suprema $\sup_{t \in T} X_t$ of a random process $(X_t)_{t \in T}$, where $t$ is a parameter ranging in some index set $T$.  If for instance $T$ is finite, then from the union bound we have
$$ \P( \sup_{t \in T} X_t > \lambda ) = \P( \bigvee_{t \in T}(X_t > \lambda)) \leq \sum_{t \in T} \P(X_t > \lambda)$$
for any $\lambda>0$.  For $\lambda$ large, one can hope to use concentration of measure inequalities to obtain exponentially strong bounds on each individual probability $\P(X_t > \lambda)$; if $T$ is not too large (e.g., subexponential in cardinality) then this method can lead to non-trivial large deviation bounds on the supremum $\sup_{t \in T} X_t$.  However, often in many cases of interest $T$ is too large for this method to be directly used; for instance, $t$ could be a continuous parameter, in which case $T$ is likely to be uncountably infinite.  But in many applications $T$ has the structure of a totally bounded metric space $(T,d)$, thus for each scale $\eps>0$ there exists some finite subset $T_\eps$ of $T$ with the property that every element of $T$ lies within $\eps$ of some element of $T_\eps$.  The minimal possible cardinality of $T_\eps$ is known as the \emph{metric entropy} of $T$ at scale $\eps$ and we will denote it by $N(T, d;\eps)$.  Applying these nets for each $\eps = 2^{-n}$, one can assign to each element $t \in T$ a chain $t_0,t_1,\dots$ converging to $t$ with $t_n \in T_{2^{-n}}$ and $d(t_n,t_{n+1}) \leq 2^{-n+1}$ for all $n$.  If $X_t$ depends continuously on $t$, the triangle inequality then gives the bound
$$ X_t \leq |X_{t_0}| + \sum_{n=0}^\infty |X_{t_n}-X_{t_{n+1}}|$$
and hence
\begin{equation}\label{chain}
 \sup_{t \in T} X_t \leq \sup_{t \in T_1} |X_{t}| + \sum_{n=0}^\infty \sup_{t \in T_{2^{-n}}; t' \in T_{2^{-n-1}}; d(t,t') \leq 2^{-n+1}} |X_{t}-X_{t'}|.
\end{equation}
It is then possible to obtain good large deviation bounds on the uncountable supremum $\sup_{t \in T} X_t$ by using the previous strategy to obtain large deviation bounds on the finite suprema $\sup_{t \in T_1} |X_{t}|$ and $\sup_{t \in T_{2^{-n}}; t' \in T_{2^{-n-1}}; d(t,t') \leq 2^{-n+1}} |X_{t}-X_{t'}|$, which one then combines using crude tools such as the union bound.  We refer to this as the \emph{chaining argument}; it is particularly effective when one has good bounds on the metric entropies $N(T,d;2^{-n})$.  A prototype application of the chaining argument is \emph{Dudley's inequality} \cite{dudley}
$$ \E \sup_{t \in T} X_t \lesssim \sum_{n \in \Z} 2^{-n} \sqrt{\log N(T, d; 2^{-n})}$$
whenever $(X_t)_{t \in T}$ is a (mean zero) Gaussian process with $T$ equipped with the metric (or more precisely, pseudo-metric)
$$ d(t, t') \coloneqq \sqrt{ \Var(X_t-X_{t'})}.$$
This inequality can be readily proven by combining \eqref{chain} with \eqref{xex} and the union bound; see for instance \cite[\S 2]{major} for a clear treatment.  However, the chaining argument is substantially more general and flexible than this, with many early applications of the chaining method due to Bourgain.  Perhaps the most well known is the following result:

\begin{theorem}[Random sets of orthonormal systems have the $\Lambda(p)$ property]\label{lambda-p} Let $\phi_1,\dots,\phi_n$ be a system of bounded orthonormal functions on a probability space $(X,\mu)$, let $2 < p < \infty$, and let $S \subset \{1,\dots,n\}$ be a random set with each $i=1,\dots,n$ lying in $S$ with an independent probability of $n^{2/p-1}$.  Then with probability $\sim 1$, one has the ``$\Lambda(p)$ inequality''
$$ \| \sum_{i \in S} a_i \phi_i \|_{L^p(X,\mu)} \lesssim_p (\sum_{i \in S} |a_i|^2)^{1/2}$$
for all real or complex numbers $a_i$.
\end{theorem}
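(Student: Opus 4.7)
The plan is to prove the quantitative strengthening
$$\E_\xi \sup_{\|a\|_{\ell^2}\le 1} \Bigl\|\sum_{i=1}^n a_i \xi_i \phi_i\Bigr\|_{L^p(X,\mu)} \lesssim_p 1,$$
where $\xi_i := 1_{i \in S}$ are the independent Bernoulli selectors with $\E\xi_i = \delta := n^{2/p-1}$; the stated $\Lambda(p)$ inequality with probability $\sim 1$ then follows from Markov's inequality applied to this supremum.

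I would begin by dualising. With $p' = p/(p-1)$, the inner $L^p$ norm equals $\sup_g |\sum_i \xi_i a_i \hat g(i)|$ over $\|g\|_{L^{p'}}\le 1$, where $\hat g(i) := \int \phi_i \bar g\,d\mu$. The quantity to control thereby becomes the expected supremum of the bilinear random process $(a,g)\mapsto\sum_i \xi_i a_i \hat g(i)$ indexed by the product $T$ of the $\ell^2$ and $L^{p'}$ unit balls. Next I would symmetrise by introducing independent Rademacher signs $\eps_i$; up to universal constants it suffices to bound $\E_\xi\E_\eps\sup_{(a,g)\in T}\sum_i\eps_i\xi_i a_i\hat g(i)$. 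Conditional on $\xi$, this inner process is sub-Gaussian in $\eps$ with the natural variance pseudometric
$$d((a,g),(a',g'))^2 = \sum_i \xi_i\,|a_i\hat g(i) - a'_i\hat g'(i)|^2,$$
placing us squarely in the setting of the chaining bound \eqref{chain} and Dudley's inequality.

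The decisive step is to estimate the metric entropy $N(T,d;\eps)$ well enough that the resulting Dudley integral is $O_p(1)$. Since $B_{L^{p'}}$ is infinite-dimensional, a naive covering fails; my plan is to decompose each dual function $g$ into dyadic level sets $\{|g|\sim 2^k\}$, so that on each level the truncated coefficient vector $(\hat g_k(i))_i$ lies in an explicit Euclidean ball in $\R^n$ (using the uniform boundedness of the $\phi_i$) whose radius and effective sparsity are controlled by the measure of the level set. The standard volumetric bound $N(B_R(\R^m),|\cdot|;\eps)\le (3R/\eps)^m$ then applies at each level, and summing the chaining contributions across the dyadic decomposition yields a combinatorial optimisation in which the sparsity parameter $\delta=n^{2/p-1}$ is precisely calibrated to make the Dudley integral converge at scale $O_p(1)$.

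The main obstacle will be this entropy computation and its interaction with the random selectors $\xi_i$: one must control the sizes of $\sum_i \xi_i 1_E(i)$ for the various sets $E$ arising from the level-set decomposition via Bernstein-type concentration, so that the pseudometric $d$ inherits its expected behaviour uniformly across scales, and then optimise the dyadic level parameter uniformly in $a$. Once the chaining bound gives $\E\sup\lesssim_p 1$, undoing the symmetrisation and invoking Markov's inequality delivers the event of probability $\sim 1$ on which the uniform $\Lambda(p)$ estimate holds.
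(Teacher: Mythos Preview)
Your overall architecture --- reduce to controlling $\E_\xi K$ where $K$ is the supremum in \eqref{ksum}, symmetrise, then chain --- is in the right spirit, but it diverges from the paper's route at the first substantive step and, more importantly, leaves the genuinely hard part unaddressed.

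The paper does \emph{not} dualise.  Instead it raises $K$ to the $p^{\mathrm{th}}$ power, obtaining an expression \eqref{supa} in which the random selector $1_{i\in S}$ appears three separate times.  This triple appearance is then handled by the decoupling trick: $S$ is split randomly into three \emph{independent} pieces $S_1,S_2,S_3$, and the single coefficient vector $a$ is split into three vectors $a,b,c$, each of which is further restricted (by a dyadic pigeonholing of the type in Section~\ref{dyadic-sec}) to have support of size $\sim m$ and entries of size $O(m^{-1/2})$ for some scale $m$.  Only after this decoupling and dyadic localisation does a Dudley-type chaining enter, and it is applied to the \emph{single} variable $a$, with $b,c$ and $S_2,S_3$ frozen.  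Your plan chains over the joint parameter $(a,g)$ with no decoupling at all; this is a genuinely different setup, and one in which the required entropy bound is not at all obvious.

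The deeper gap is in your entropy step.  You propose to cover $\{(a_i\hat g(i))_i\}$ by decomposing $g$ into dyadic level sets and invoking the volumetric bound $N(B_R,\|\cdot\|_2;\eps)\le (3R/\eps)^m$.  But the relevant sets live in $\R^n$, and a naive volumetric argument gives covering numbers exponential in $n$, which swamps the Dudley integral.  What actually makes Bourgain's proof close are two ingredients you have not mentioned.  First, the entropy estimates invoked are the dual Sudakov--type bounds of Bourgain--Lindenstrauss--Milman \cite{zonotopes}, not elementary volume bounds.  Second, and crucially, the argument is \emph{self-referential}: the entropy of the sets $\{\sum_{j\in S_2} b_j\phi_j\,|\sum_{k\in S_3}c_k\phi_k|^{p-2}\}$ is itself controlled in terms of the random quantity $K$, so that after chaining one obtains an inequality of the schematic form $\E K \le C_p + C_p (\E K)^{\theta}$ with $\theta<1$, which then bootstraps to $\E K\lesssim_p 1$.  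Your outline contains no mechanism of this kind; the phrase ``the sparsity parameter $\delta=n^{2/p-1}$ is precisely calibrated to make the Dudley integral converge'' hides exactly the step where all the difficulty lies.  (Talagrand's later proof \cite{talagrand} does dispense with decoupling, but it replaces Dudley's inequality by the full strength of generic chaining with majorising measures, and it too has a bootstrap; a plain Dudley bound with level-set volumetrics will not reach the target.)
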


This theorem famously resolved a long-standing problem in harmonic analysis, namely whether it was possible to produce an (infinite) family of plane waves $x \mapsto e^{2\pi i nx}$ on the unit circle $\R/\Z$ which obeyed the $\Lambda(p)$ inequality but not the $\Lambda(q)$ inequality for a given choice of $2 < p < q < \infty$.

The proof of Theorem \ref{lambda-p} is quite complicated and we only give an extremely oversimplified sketch here.  The main difficulty here is one needs to control a random uncountable supremum
\begin{equation}\label{ksum}
 K \coloneqq \sup_{|a| \leq 1} \| \sum_{i=1}^n 1_{i \in S} a_i \phi_i \|_{L^p(X,\mu)} 
\end{equation}
where $a = (a_1,\dots,a_n)$ ranges over vectors of norm at most $1$. Raising this expression to the power $p$, we obtain 
\begin{equation}\label{supa}
 \sup_{|a| \leq 1} \int_X \sum_{i=1}^n 1_{i \in S} a_i \phi_i \sum_{j=1}^n 1_{j \in S} \overline{a}_j \overline{\phi}_j |\sum_{k=1}^n 1_{k \in S} a_k \phi_k|^{p-2}\ d\mu.
\end{equation}
The set $S$ appears here three times, but by randomly decomposing $S$ into three subsets $S_1,S_2,S_3$, and also judiciously splitting $a$ into three components $a,b,c$, it turns out that one can reduce the task of bounding this expression into that of bounding\footnote{This sort of trick to decouple probabilistic expressions of dependent random variables into probabilistic expressions of independent random variables is another useful member of Bourgain's toolkit that is now widely used in probability theory.  We will not discuss this decoupling trick in further detail here, but see for instance \cite{pena}.} ``decoupled'' analogues of \eqref{supa} such as
$$ \sup_{|a|,|b|,|c| \leq 1} \int_X \sum_{i=1}^n 1_{i \in S_1} a_i \phi_i \sum_{j=1}^n 1_{j \in S_2} \overline{b}_j \overline{\phi}_j |\sum_{k=1}^n 1_{k \in S_3} c_k \phi_k|^{p-2}\ d\mu.$$
In fact by using some further dyadic decompositions (in the spirit of Section \ref{dyadic-sec}) one can make further restrictions on the support and pointwise magnitudes of $a,b,c$); a typical such restriction to keep in mind is that there is some $1 \leq m \leq n$ for each of the $a,b,c$ are supported on sets of cardinality at most $m$ and are pointwise bounded by $O(m^{-1/2})$.  One then applies a variant of Dudley's inequality to control the supremum in $a$, reducing matters to controlling metric entropies of a collection of functions of the form
$$ \{ \sum_{j=1}^n 1_{j \in S_2} b_j \phi_j |\sum_{k=1}^n 1_{k \in S_3} c_k \phi_k|^{p-2}: |b|, |c| \leq 1 \}$$
(with additional constraints on the support of $b,c$ that we do not detail here).  These are controlled in turn by elementary inequalities (such as H\"older's inequality) as well as a variant of the random variable $K$ defined in \eqref{ksum}, as well as some metric entropy estimates of Bourgain, Lindenstrauss, and Milman \cite{zonotopes}.

The chaining method was later streamlined into the \emph{generic chaining} method of Talagrand, in which the metric balls $\{ t': d(t,t') \leq \eps\}$ that implicitly appear in the chaining argument are allowed to be weighted by an arbitrary measure on $T$ known as a \emph{majorizing measure}, leading to estimates that are essentially optimal in many situations, and can be used for instance to give a simplified proof of Theorem \ref{lambda-p} with a stronger conclusion, and which avoids the use of decoupling methods; see \cite{talagrand} for details.

There are many other works of Bourgain (and coauthors) in which metric entropy and chaining arguments are used to bound large deviations of supremum type quantities.  Here is a small sample:

\begin{itemize}
  \item The paper \cite{zonotopes} concerns the approximation theory of zonotopes (finite sums of intervals in a normed vector space), showing that all zonoids (limits of zonotopes) can be efficiently approximated by ``low complexity'' zonotopes.  A key step is to use a chaining argument to show that an $L^1$ norm $\|x\|_{L^1(X,\mu)}$ can be efficiently approximated by an empirical sample $\frac{1}{N_0} \sum_{i=1}^{N_0} |x(i)|$ uniformly for all $x$ in a certain convex body, relying heavily on metric entropy estimates on convex bodies such as the dual Sudakov inequality \cite{pajor}. 	
	\item In \cite{hal}, a metric entropy and concentration of measure argument is used to show that Montgomery's large values conjecture \cite{mont} in analytic number theory on the distribution of large values of a Dirichlet polynomial $\sum_{n \sim M} a_n n^{it} = \sum_{n \sim M} a_n e^{it\log n}$ is almost surely true if one replaces the frequencies $\log n$ by a suitable random set.
	\item In \cite{iso}, a metric entropy and concentration of measure argument (as well as the decoupling trick) is used to show if $K$ is a symmetric convex body $K$ of unit volume whose moment of inertia $\int_K yy^T\ dy$ is normalised to be a constant multiple $LI$ of the identity matrix, then this moment of inertia can be closely approximated by that of a surprisingly small number of randomly chosen points from $K$; this has applications to random matrix theory and statistics, in particular in allowing one to compare a covariance matrix with an empirical sample of that matrix \cite{empirical}. 
	\item In \cite{rip}, a chaining argument combined with dyadic decomposition is used to show that a randomly selected collection $S$ of columns in a bounded orthonormal system obeys the ``restricted isometry property'', of being an approximate isometry when restricted to any $m$ rows, as long as $S$ is only slightly larger than $m$, improving quantitatively over previous bounds \cite{candes,rv} in the area.
\end{itemize}

\section{Putting it all together: a case study}\label{case}

Let us say that a subset $S$ of the reals has \emph{property (E)} if, whenever every measurable subset $A$ of $\R$ of positive measure contains an affine image $x+tS \coloneqq \{ x+ty: y \in S \}$ of $S$ for some $x \in \R$ and $t \neq 0$.  An easy application of the Lebesgue density theorem shows that every finite set of reals has property (E).  The following question of Erd\H{o}s is still unsolved:

\begin{problem}[Erd\H{o}s similarity problem] Does there exist an infinite set $S$ with property (E)? 
\end{problem}

One of the strongest general negative results in this direction is by Bourgain \cite{similarity}:

\begin{theorem}[Triple sumsets fail (E), qualitative version]\label{qual}  Let $S_1,S_2,S_3$ be infinite subsets of $\R$.  Then the sumset $S_1+S_2+S_3 \coloneqq  \{s_1+s_2+s_3: s_1 \in S_1, s_2 \in S_2, s_3 \in S_3\}$ fails property (E).
\end{theorem}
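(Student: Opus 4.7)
The plan is to follow, in order, the four tools highlighted in this article. Fix infinite $S_1, S_2, S_3 \subset \R$. Since containment of an affine image $x + tS$ in $A$ forces containment of $x+tF$ for every finite $F \subset S$, it suffices, after passing to finite subsets $F_i \subset S_i$ of cardinality $N$ (to be chosen large), to construct a measurable $A \subset \R$ of positive measure that contains no affine image $x + t(F_1+F_2+F_3)$ with $t \neq 0$. This is the \emph{quantitative} reformulation (Section \ref{quant-sec}): the infinite problem collapses to a finite-parameter construction for each sufficiently large $N$.

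After rescaling so that $F_1+F_2+F_3$ has diameter of order $1$, we may restrict to dilations $t \in (0,1]$. We then apply \emph{dyadic pigeonholing} (Section \ref{dyadic-sec}) in the $t$-variable, splitting $(0,1]$ as a disjoint union of scales $[2^{-k-1}, 2^{-k}]$, $k \geq 0$. For each scale $k$ we will construct an independent random ``hole set'' $U_k \subset [0,1]$ at matching spatial scale $\sim 2^{-k}/N^{O(1)}$, and set $A \coloneqq [0,1] \setminus \bigcup_{k \geq 0} U_k$. Letting $|U_k|$ decay geometrically (say $|U_k| \lesssim 2^{-k}/k^2$) ensures $|A| \geq 1/2$. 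The task then reduces to arranging, for each $k$, that with high probability every affine image $x + t(F_1+F_2+F_3)$ with $t$ in the $k$-th dyadic scale meets $U_k$.

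To build $U_k$ we use \emph{random translations} (Section \ref{random-sec}): take $U_k$ to be a random union of translates of a short interval $I$, with translates drawn independently and uniformly in $[0,1]$. For any fixed pair $(x,t)$ at scale $k$, the probability that $x + t(F_1+F_2+F_3)$ misses $U_k$ decays exponentially in the number of points of $x + t(F_1+F_2+F_3)$ that fall in distinct cells of size $|I|$, in direct analogy with the proof of Lemma \ref{translate}. The triple sumset structure ensures this separated cardinality grows polynomially in $N$, since threefold Minkowski sums are sufficiently spread out at every relevant scale. Finally we apply the \emph{metric entropy/chaining} strategy (Section \ref{entropy-sec}): the parameter space $(x,t)$ at scale $k$ is essentially two-dimensional with polynomially bounded metric entropy, and a chaining argument in the spirit of Dudley's inequality reduces control of the uncountable supremum to a union bound over a sufficiently fine $\delta$-net, with continuity errors absorbed into the minimal spacing of $t(F_1+F_2+F_3)$. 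Summing the resulting good-event probabilities over $k$ produces, with positive probability, an $A$ of measure at least $1/2$ avoiding all affine images of $F_1+F_2+F_3$.

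The main obstacle will be the last step, where three quantities must be kept in tension: the entropy cost over $(x,t)$, the mandated smallness of $|U_k|$, and the exponential concentration rate from the random translations step. The essential input is the triple sumset structure of $F_1+F_2+F_3$, which supplies enough ``decoupled'' hits at each scale for the exponential decay to beat the entropy cost; a single or double sumset would not provide sufficient richness, which is precisely why Theorem \ref{qual} is formulated for three summands. This is the same cubic decoupling phenomenon that underlies the proof of Theorem \ref{lambda-p} above.
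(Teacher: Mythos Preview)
Your reduction in the first paragraph is fatally flawed. You claim it suffices to construct $A\subset\R$ of positive measure containing no affine image of a \emph{fixed finite} set $F_1+F_2+F_3$, but this is impossible: as noted in the sentence immediately preceding the statement of the Erd\H{o}s problem, the Lebesgue density theorem shows that every finite set has property (E), so every set of positive measure contains affine copies of $F_1+F_2+F_3$ at all sufficiently small scales. Your scale-by-scale construction reflects this failure quantitatively: with $N$ fixed, the concentration rate $\exp(-cN^3\epsilon_k)$ cannot beat the entropy cost at scale $k$ once $k$ is large, since $\epsilon_k$ must be summable in $k$ while the log-entropy of the $(x,t)$-net grows at least linearly in $k$. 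Relatedly, your invocation of the triple-sumset hypothesis is illusory: in your sketch the only role of $F_1+F_2+F_3$ is to supply ``many well-separated points'', and your entropy bound is simply the two-dimensionality of $(x,t)$, which does not see the sumset structure at all. Since your argument would apply verbatim to any large finite $F$, and since such $F$ have property (E), it cannot close.

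Bourgain's actual argument is organised quite differently. The quantitative reformulation (Theorem~\ref{quant}) is a maximal-type inequality on a high-dimensional torus $(\R/\Z)^J$ with $J=3J_0$, not on $\R$, and the probabilistic construction takes place there. The relevant entropy is that of the one-parameter family $\{tS_0 v \bmod \Z^J : 1\le t\le 2\}$ in the Hausdorff metric on $(\R/\Z)^J$, and the triple-sumset structure enters precisely here: the $J_0^3$ points of $tS_0 v$ are determined by only $O(J_0^2)$ real parameters (the ratios $s_{i',j'}/s_{i,j}$), so the metric entropy at scale $\delta$ is $O(1/\delta)^{O(J_0^2)}$, which is subexponential in the concentration exponent $J_0^3$. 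This $J_0^3$ versus $J_0^2$ gap is exactly why three summands are needed; with two summands one would have $J_0^2$ points governed by $O(J_0^2)$ parameters and the entropy would swamp the concentration. The passage from the torus estimate back to a subset of $\R$ then uses dyadic pigeonholing and random translations in roughly the spirit you describe, but only \emph{after} the key inequality on $(\R/\Z)^J$ has been established.
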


The corresponding question for double sumsets $S_1+S_2$ remains open; it will be clear shortly why it is necessary in Bourgain's arguments to have at least three summands.

As we shall see, the proof of this result can largely be described as an application of several of the tools discussed in this paper.  The first step is to convert the problem to a quantitative one, as per Section \ref{quant-sec}.  The quantitative formulation is as follows:

\begin{theorem}[Triple sumsets fail (E), quantitative version]\label{quant}  Let $S_1,S_2,S_3$ be bounded infinite subsets of $\R$ containing $0$ as an adherent point.  Then there does not exist a constant $C$ for which one has the bound
$$ \int_{(\R/\Z)^J} \inf_{1 < t < 2} \sup_{x' \in x+t S_0 v} |f(x')|\ dx \leq C \int_{(\R/\Z)^J} |f(x)|\ dx$$
for all tori $(\R/\Z)^J$, all continuous $f: (\R/\Z)^J \to \R$, all vectors $v \in \R^J$, and all finite subsets $S_0$ of $S_1+S_2+S_3$.
\end{theorem}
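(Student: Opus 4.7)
The plan is to show that no universal constant $C$ can work in the claimed inequality by producing, for every large $C$, an explicit counterexample $(J, f, v, S_0)$. The natural target is $f = 1_A$ (mollified to be continuous) for a set $A \subset (\R/\Z)^J$ of tiny measure $|A| < 1/C$ enjoying the covering property that $(x + t S_0 v) \cap A \neq \emptyset$ for (almost) every $x$ and every $t \in (1,2)$: the left-hand side is then $\gtrsim 1$, while the right-hand side is at most $C|A| \ll 1$.

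To assemble the finite data, use that each $S_i$ has $0$ as an adherent point to extract lacunary sequences $a_n \in S_1$, $b_n \in S_2$, $c_n \in S_3$ tending rapidly to zero, and set $S_0 \coloneqq \{a_i + b_j + c_k : 1 \leq i,j,k \leq N\}$, of size $N^3$. The product structure inflates $|S_0|$ cubically while keeping the total spread in $\R$ controlled by the outermost scales. Then reduce the continuum $t \in (1,2)$ to a finite lacunary set of scales by the dyadic pigeonholing method of Section \ref{dyadic-sec}, at the cost of a logarithmic factor.

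Construct $A$ by random translations, as in Section \ref{random-sec}: take a base set $A_0 \subset (\R/\Z)^J$ (say a small cube) of measure $\delta$, draw $M$ independent Haar-uniform shifts $g_1, \ldots, g_M$, and put $A \coloneqq \bigcup_i (g_i + A_0)$, so that $\E|A| \leq M\delta$. Choose $v \in \R^J$ generically (for example with $\Q$-linearly independent entries, scaled according to the three lacunary systems) so that for typical $(x,t)$ the $N^3$ points of $x + t S_0 v$ are well-separated and approximately equidistributed in the torus. The probability that a single fixed line is missed by a single random translate is then $\approx 1 - N^3 \delta$, and the probability it is missed by all $M$ translates is $\lesssim \exp(-M N^3 \delta)$. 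Tune $M\delta$ just below $1/C$ while keeping $M N^3 \delta$ large.

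The crux is upgrading this pointwise miss-probability to a uniform covering statement over the continuum of $(x,t)$. I would invoke the chaining and metric entropy machinery of Section \ref{entropy-sec}: discretize $(x,t)$ at a small scale $\eta$, for which the metric entropy is at most $(1/\eta)^{O(J)}$, union-bound the miss probabilities against this count, and chain dyadically to pass to the continuum. The resulting budget requires $M N^3 \delta \gtrsim J \log(1/\eta)$, which is compatible with $M\delta < 1/C$ once $N$ is taken a suitable power of $C$. The three-summand hypothesis enters decisively at exactly this stage: the decomposition $S_0 \subset S_1 + S_2 + S_3$ allows one to split the trilinear probabilistic expression controlling the covering event into three nearly independent factors (cf.\ the footnote in Section \ref{entropy-sec}), both to guarantee the $N^3$ separation of the points of $x + tS_0 v$ and to control the correlated terms in the concentration step. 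The main obstacle is to harmonize all parameters---the lacunarity of $(a_i),(b_j),(c_k)$, the direction $v$, the dimension $J$, the discretization $\eta$, and the base measure $\delta$---so that the chaining budget closes while $|A| \ll 1/C$; it is the failure of this balance in the bilinear analogue that leaves the $S_1 + S_2$ problem open.
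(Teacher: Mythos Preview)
Your overall architecture---random indicator, then a union bound over $t$ via metric entropy---is the right shape, but the step where you close the entropy budget is where the proposal breaks, and your account of how the triple-sumset hypothesis enters is not the actual mechanism. You discretize $(x,t)$ at scale $\eta$ and quote entropy $(1/\eta)^{O(J)}$; but for the chaining to be valid, $\eta$ must be small enough that perturbing $t$ by $\eta$ moves each point of $tS_0v$ by less than the cube scale. Since your $v$ is ``scaled according to the lacunary systems'' (so its entries are of order $1/s_{i,j}$ and hence unbounded as $s_{i,j}\to 0$), the Lipschitz constant of $t \mapsto tS_0v$ is enormous, $\log(1/\eta)$ is not controlled by $J$ or $N$, and your budget $MN^3\delta \gtrsim J\log(1/\eta)$ does not close. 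Note too that if it did close as written, nothing in it would distinguish a triple sumset from any $N^3$-point subset of an infinite set, which would resolve the full Erd\H{o}s problem. (A minor point: there is no need to union-bound over $x$; one fixes $x$, union-bounds over $t$, and then integrates in $x$ by Fubini.)

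In the paper's argument $v$ is \emph{not} generic, and the three summands do not enter via any decoupling of a probabilistic expression. One sets $J = 3J_0$, picks $s_{i,j} \in S_i$ with $|s_{1,1}| \gg \cdots \gg |s_{3,J_0}| > 0$, and takes the specific vector $v = \bigl(\tfrac{1}{10\,s_{i,j}}\bigr)_{i,j}$ together with $S_0 = \{s_{1,j_1}+s_{2,j_2}+s_{3,j_3} : 1\le j_1,j_2,j_3\le J_0\}$. The point is algebraic: every coordinate of every point of $tS_0v \bmod \Z^J$ is a sum of three of the $O(J_0^2)$ numbers $\tfrac{t\,s_{i',j'}}{10\,s_{i,j}} \bmod 1$, so the entire configuration is determined by $O(J^2)$ parameters in $[0,1)$, and the Hausdorff-metric entropy of $\{tS_0v \bmod \Z^J : 1 \le t \le 2\}$ at scale $\delta$ is at most $O(1/\delta)^{O(J^2)}$ \emph{regardless} of how large $\|v\|$ is. Taking $f = 1_E$ with $E$ a Bernoulli-$\eps$ union of grid cubes of sidelength $\sim 1/J$, the miss probability at a fixed $(x,t)$ is $\exp(-c\,\eps J_0^3)$, and the union bound over the $t$-net succeeds precisely because $J_0^3$ beats $J_0^2$ in the exponent. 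That inequality---$|S_0|\sim J_0^3$ points against an $O(J_0^2)$-dimensional parameter space---is exactly, and only, where ``three summands'' is used; with $S_1+S_2$ one would have $|S_0|\sim J_0^2$ against $O(J_0^2)$ parameters and the comparison would not close.
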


Let us now sketch why Theorem \ref{quant} implies Theorem \ref{qual}.  (The converse implication is also true; see \cite[\S 2]{similarity}.)
Since unbounded sets clearly fail property (E), we may assume without loss of generality in Theorem \ref{qual} that $S_1,S_2,S_3$ are bounded; by Bolzano-Weierstrass and translation, we may assume that each of the $S_i$ contain $0$ as an adherent point.

Let $\delta>0$ be sufficiently small, and let $M>0$.  From Theorem \ref{quant} and rescaling, one can find a torus $(\R/\Z)^J$, a vector $v \in \R^J$, a finite subset $S_0$ of $S_1+S_2+S_3$ and a continuous function $f$ (which we can take to be non-negative) such that
$$ \int_{(\R/\Z)^J} F(x)\ dx > M \int_{(\R/\Z)^J} f(x)\ dx$$
where $F(x) \coloneqq  \inf_{\delta < t < 2\delta} \sup_{x' \in x+t S_0 v} f(x')\ dx$.  Applying dyadic pigeonholing (and writing $\int_{(\R/\Z)^J} F\ dx = \int_0^\infty |\{ F \geq \lambda \}|\ d\lambda$ and $\int_{(\R/\Z)^J} f\ dx = \int_0^\infty |\{ f > \lambda \}|\ d\lambda$), as per Section \ref{dyadic-sec}, we can then find a threshold $\lambda>0$ such that
$$ |\{ F \geq \lambda\}| > M |\{ f > \lambda\}|.$$
If we write $A \coloneqq  \{ f > \lambda\}$ and 
$$ A_1 \coloneqq \bigcap_{\delta < t < 2\delta} \bigcup_{y \in S_0} (A - tyv)$$
then we have $|A_1| \geq M |A|$.  The set $A_1$ could be small compared with $(\R/\Z)^J$; however by using the random translations trick as per Section \ref{random-sec}, we can find an open set $B \subset (\R/\Z)^J$ (a union of finitely many translates of $A$) of arbitrarily small measure such that the set
$$ B_1 \coloneqq \bigcap_{\delta < t < 2\delta} \bigcup_{y \in S_0} (B - tyv)$$
has measure arbitrarily close to $1$.  By construction, the set $B_1 \backslash B$ does not contain any set of the form $x+t(S_1+S_2+S_3)v$ with $x \in \R/\Z$ and $\delta < t < 2\delta$ (here we use the fact that $B$ is open and $0$ is an adherent point of $S_1+S_2+S_3$). If one then considers the set $\{ y \in [0,1]: x+tyv \in B_1 \backslash B\}$ for a randomly chosen $x \in (\R/\Z)^J$, one can then find a subset of $[0,1]$ of measure arbitrarily close to $1$ that does not contain any set of the form $x+tS$ with $x \in \R$ and $\delta < t < 2\delta$. Taking intersections over all small dyadic choices of $\delta$ (and then restricting to a small interval to eliminate large scales) we can then establish Theorem \ref{qual}.

Now we sketch the proof of Theorem \ref{quant}.  We have to find a continuous function $f: (\R/\Z)^J \to \R$ and a vector $v \in \R^J$ for which
$$ \int_{(\R/\Z)^J} \inf_{1 < t < 2} \sup_{x' \in x + tS_0 v} |f(x')|\ dx \gg \int_{(\R/\Z)^J} |f(x)|\ dx$$
for some finite $S_0 \subset S_1+S_2+S_3$, where we informally use $X \gg Y$ to denote the claim that $X$ is much larger than $Y$.  The next idea is to take advantage of large deviations as per Section \ref{entropy-sec}.  To do this one needs to select the vector $v$ so that the collection of dilates $tS_0 v \mod \Z^J, 1 \leq t \leq 2$ have ``low entropy''.  The construction is as follows. Let $J \coloneqq  3J_0$ for a large $J_0$, then as $S_1,S_2,S_3$ all have $0$ as an adherent point we can find non-zero real numbers $s_{i,j} \in S_i$ for $i=1,2,3$ and $j=1,\dots,J_0$ with the relative size relation
$$ |s_{1,1}| \gg \dots \gg |s_{1,J_0}| \gg |s_{2,1}| \gg \dots \gg |s_{2,J_0}| \gg |s_{3,1}| \gg \dots \gg |s_{3,J_0}| > 0.$$
We then let $v \in \R^J$ be the vector
$$ v \coloneqq \left(\frac{1}{10 s_{i,j}}\right)_{i=1,2,3; j=1,\dots,J_0}.$$
If we set $S_0 \coloneqq  \{ s_{1,j_1} + s_{2,j_2} + s_{3,j_3}: 1 \leq j_1,j_2,j_3 \leq J_0\}$, then $S_0$ is a finite subset of $S_1+S_2+S_3$.  A routine calculation shows that for any $1 \leq t \leq 2$, the set $t S_0 v \mod \Z^J$ consists of $J_0^3$ points separated from each other (in the $\ell^\infty$ metric on $(\R/\Z)^J$) by $\gtrsim 1$.  If this set had no further structure, one would then expect the ``entropy'' of such sets to be exponential in $J \times J_0^3 \sim J^4$.  However the arithmetic structure gives this set significantly lower entropy.  Indeed, observe that each of the $J$ coordinates of each of the $J_0^3$ elements of $t S_0 v \mod \Z^J$ are sums of three quantities of the form $\frac{s_{i',j'}}{10 s_{i,j}} \mod 1$ for $i,i'=1,\dots,3$, $j,j' = 1,\dots,J_0$.  Thus the set $t S_0 v \mod \Z^J$ is completely described by $O(J^2)$ parameters, so the metric entropy of these sets (using the Hausdorff metric and the $\ell^\infty$ norm on $(\R/\Z)^J$) at any given metric scale $0 < \delta < 1$ is $O(1/\delta)^{O(J^2)}$. In particular, this entropy is subexponential compared to the cardinality $J_0^3$ of $S_0$. (It is at this point that it is essential that we have at least three summands in the set $S_1+S_2+S_3$.)

Now we sketch how to construct the function $f$.  Let $\eps>0$, and suppose $J$ is large depending on $\eps$.  Partition $(\R/\Z)^J$ into cubes of sidelength (say) $\frac{1}{100 J}$, and let $E$ be the union of a random collection of these cubes, with each cube selected in $E$ with an independent probability of $\eps$.  We set $f$ to be the indicator function $1_E$ (we ignore for this sketch the requirement that $f$ be continuous, as this can be addressed by a standard mollification).  Then $\int_{(\R/\Z)^J} |f(x)|\ dx \sim \eps$ with high probability.  On the other hand, for any given $x \in (\R/\Z)^J$ and $1 < t < 2$, we will have $\sup_{x' \in x + tS_0 v} |f(x')| = 1$ with probability at least $1 - \exp( -\eps J_0^3 )$.  Using the metric entropy bound, one can then show that $\inf_{1 < t < 2} \sup_{x' \in x + tS_0 v} |f(x')| = 1$ with probability $1 - O( O(J)^{O(J^2)} \exp( -\eps J_0^3 ) )$, which is comparable to $1$ if $J$ is large enough.  This gives the claim.


\begin{thebibliography}{10}

\bibitem{ball}
K. Ball, \emph{The Legacy of Jean Bourgain in Geometric Functional Analysis}, preprint.

\bibitem{density}
J. Bourgain, \emph{A Szemer\'edi type theorem for sets of positive density in $R^k$}, Israel J. Math. \textbf{54} (1986), no. 3, 307--316.

\bibitem{lipschitz}
J. Bourgain, Remarks on the extension of Lipschitz maps defined on discrete sets and uniform homeomorphisms. Geometrical aspects of functional analysis (1985/86), 157–167, Lecture Notes in Math., 1267, Springer, Berlin, 1987.

\bibitem{similarity}
J. Bourgain, \emph{Construction of sets of positive measure not containing an affine image of a given infinite structures}, Israel J. Math. \textbf{60} (1987), no. 3, 333--344.


\bibitem{ergodic}
J. Bourgain, \emph{Pointwise ergodic theorems for arithmetic sets}, With an appendix by the author, Harry Furstenberg, Yitzhak Katznelson and Donald S. Ornstein. Inst. Hautes Études Sci. Publ. Math. No. 69 (1989), 5--45.

\bibitem{besicovitch}
J. Bourgain, \emph{Besicovitch type maximal operators and applications to Fourier analysis}, Geom. Funct. Anal. 1 (1991), no. 2, 147--187.

\bibitem{dirichlet}
J. Bourgain, \emph{On the distribution of Dirichlet sums}, J. Anal. Math. \textbf{60} (1993), 21--32.

\bibitem{hal}
J. Bourgain, \emph{Remarks on Halasz-Montgomery type inequalities}, Geometric aspects
of functional analysis (Israel, 1992–1994), 25–39, Oper. Theory Adv. Appl., 77,
Birkh¨auser, Basel, 1995.

\bibitem{nls}
J. Bourgain, \emph{Global wellposedness of defocusing critical nonlinear Schr\"odinger equation in the radial case}, J. Amer. Math. Soc. \textbf{12} (1999), no. 1, 145--171.

\bibitem{iso}
J. Bourgain, \emph{Random points in isotropic convex sets}, Convex geometric analysis (Berkeley, CA, 1996), 53--58,
Math. Sci. Res. Inst. Publ., 34, Cambridge Univ. Press, Cambridge, 1999.

\bibitem{roth}
J. Bourgain, \emph{On triples in arithmetic progression}, Geom. Funct. Anal. \textbf{9} (1999), no. 5, 968--984.

\bibitem{rip}
J. Bourgain, \emph{An improved estimate in the restricted isometry problem}, Geometric aspects of functional analysis, 65--70, Lecture Notes in Math., 2116, Springer, Cham, 2014.


\bibitem{zonotopes}
J. Bourgain, J. Lindenstrauss, V. Milman, \emph{Approximation of zonoids by zonotopes}, Acta Math. 162 (1989), no. 1-2, 73--141.

\bibitem{restricted}
J. Bourgain, L. Tzafriri, \emph{Invertibility of ``large'' submatrices with applications to the geometry of Banach spaces and harmonic analysis}, Israel J. Math. 57 (1987), no. 2, 137--224.

\bibitem{kadison}
J. Bourgain, L. Tzafriri, \emph{On a problem of Kadison and Singer}, J. Reine Angew. Math. 420 (1991), 1--43.


\bibitem{candes}
E. Cand\`es, T. Tao, \emph{Near-optimal signal recovery from random projections: universal encoding strategies?}, IEEE Trans. Inform. Theory \textbf{52} (2006), no. 12, 5406--5425.

\bibitem{cauchy}
A. Cauchy, Anciens Exercises, vol. 2 (1827), p. 221. 

\bibitem{pena}
V. de la Pe\~{n}a, E. Gin\'e, Decoupling, From dependence to independence. Randomly stopped processes. U-statistics and processes. Martingales and beyond. Probability and its Applications (New York). Springer-Verlag, New York, 1999. 

\bibitem{demeter}
C. Demeter, \emph{Bourgain's work in Fourier restriction}, preprint.

\bibitem{dudley}
R. Dudley, \emph{The sizes of compact subsets of Hilbert space and continuity of Gaussian processes}, Journal of Functional Analysis \textbf{1} (1967), 290--330.

\bibitem{fkw}
H. Furstenberg, Y. Katznelson, B. Weiss, \emph{Ergodic theory and configurations in sets of positive density}, Mathematics of Ramsey theory, 184--198, Algorithms Combin., 5, Springer, Berlin, 1990.

\bibitem{kenig}
C. Kenig, \emph{On the work of Jean Bourgain in nonlinear dispersive
equations}, preprint.

\bibitem{ledoux}
M. Ledoux, The concentration of measure phenomenon. Mathematical Surveys and Monographs, 89. American Mathematical Society, Providence, RI, 2001.

\bibitem{mont}
H. Montgomery, Topics in multiplicative number theory. Lecture Notes in Mathematics, Vol. 227. Springer-Verlag, Berlin-New York, 1971.

\bibitem{pajor}
A. Pajor, N. Tomczak-Jaegermann, \emph{Subspaces of small codimension of finite dimensional Banach spaces}, Proc. Amer. Math. Soc. \textbf{97} (1986), 634--642.

\bibitem{pisier}
G. Pisier, \emph{Factorization of operators through $L^{p,\infty}$ or $L^{p,1}$ and noncommutative generalizations},
Math. Ann. 276 (1986), no. 1, 105--136.

\bibitem{ribe}
M. Ribe, \emph{On uniformly homeomorphic normed spaces}, Ark. Mat. \textbf{14} (1976), no. 2, 237--244.

\bibitem{newman}
B. Rodgers, T. Tao, \emph{The de Bruijn-Newman constant is non-negative},  Forum Math. Pi \textbf{8} (2020), e6, 62 pp.

\bibitem{rota}
G-C. Rota, \emph{Ten lessons I wish I had been taught}, Notices Amer. Math. Soc. 44 (1997), no. 1, 22--25.

\bibitem{stein}
E. M. Stein, \emph{On limits of seqences of operators}, Ann. of Math. (2) \textbf{74} (1961), 140--170.

\bibitem{roth-thm}
K. F. Roth, \emph{On certain sets of integers}, J. London Math. Soc. \textbf{28} (1953), 104--109.

\bibitem{rv}
M. Rudelson, R. Vershynin, \emph{On sparse reconstruction from Fourier and Gaussian measurements}, Comm. Pure Appl. Math. \textbf{61} (2008), no. 8, 1025--1045.

\bibitem{major}
M. Talagrand, \emph{Majorizing measures: the generic chaining}, Ann. Probab. 24 (1996), no. 3, 1049--1103. 

\bibitem{talagrand}
M. Talagrand, The generic chaining. Upper and lower bounds of stochastic processes. Springer Monographs in Mathematics. Springer-Verlag, Berlin, 2005.

\bibitem{tao-vu}
T. Tao, V. Vu, Additive combinatorics. Cambridge Studies in Advanced Mathematics, 105. Cambridge University Press, Cambridge, 2006.

\bibitem{empirical}
R. Vershynin, \emph{How close is the sample covariance matrix to the actual covariance matrix?}, J. Theoret. Probab. \textbf{25} (2012), no. 3, 655--686. 

\end{thebibliography}
\end{document}